\let\Gamma\varGamma
\newcommand{\arxiv}[1]{\href{http://www.arxiv.org/abs/#1}{arXiv:#1}}
\patchcmd{\section}{\scshape}{\bfseries}{}{}
\renewcommand{\@secnumfont}{\bfseries}
\newcommand{\Rb}{\mathbb{R}}
\newcommand{\Sym}{\mathrm{Sym}}
\DeclareMathOperator{\tr}{\mathrm{tr}}
\newcommand{\Ac}{\mathcal{A}}
\newcommand{\Bc}{\mathcal{B}}
\newcommand{\ee}{\mathrm{e}}
\newcommand{\<}{\langle}
\renewcommand{\>}{\rangle}
\newcommand{\der}{\mathrm{d}}
\DeclareMathOperator{\dd}{d\hspace{-2.5pt}}
\newcommand{\dt}{\partial_t}
\newcommand{\Lap}{\Delta}
\newcommand{\D}{\mathrm{D}\mkern 1mu}
\newcommand{\Dy}{\widetilde{\mathrm{D}}\mkern 1mu}
\newcommand{\gMetric}{\mathrm{g}}
\newcommand{\gind}{\gMetric}
\newcommand{\tgind}{\widetilde{\gind}}
\newcommand{\sTensor}{\mathrm{s}}
\newcommand{\sind}{\sTensor}
\newcommand{\RpiM}{\pi_1}
\newcommand{\RpiN}{\pi_2}
\newcommand{\piM}{\RpiM}
\newcommand{\piN}{\RpiN}
\newcommand{\Rm}{\mathrm{R}}
\newcommand{\Rind}{\Rm}
\DeclareMathOperator{\Ric}{Ric}
\newcommand{\A}{\mathrm{A}}
\newcommand{\Hv}{\vv{H}} 			% mean curvature vector (with vector arrow)
\newcommand{\RM}{\Rb^2}
\newcommand{\RN}{\Rb^2}
\newcommand{\M}{\RM}
\newcommand{\N}{\RN}
\newcommand{\RgM}{\gMetric_{\RM}}
\newcommand{\RgN}{\gMetric_{\RN}}
\newcommand{\RgMN}{\gMetric_{\RM\times\RN}}
\newcommand{\RsMN}{\sTensor_{\RM\times\RN}}
\newcommand{\sMN}{\RsMN}
\newcommand{\gM}{\RgM}
\newcommand{\gN}{\RgN}
\newcommand{\nN}{\nabla^{\perp}}
\newcommand{\prN}{\mathrm{pr}^{\perp}}
\newcommand{\sv}{\lambda} 
\DeclareSymbolFont{UPM}{U}{eur}{m}{n}
\DeclareMathSymbol{\partial}{0}{UPM}{"40}
\newcommand{\fScaled}[1]{\widetilde{f}_{#1}}
\newcommand{\tScaled}{r}
\newcommand{\xScaled}{y}
\newcommand{\fys}[1]{\fScaled{#1}(\xScaled,\tScaled)}
\setlist[enumerate,1]{leftmargin=1cm}
\newtheorem{mainthm}{Theorem}
\newtheorem{theorem}{Theorem}
\newtheorem{lemma}[theorem]{Lemma}
\newtheorem{corollary}[theorem]{Corollary}
\theoremstyle{definition}
\newtheorem{remark}[theorem]{Remark}
\newtheorem{definition}[theorem]{Definition}
\newtheorem{example}[theorem]{Example}
\numberwithin{equation}{section}
\numberwithin{theorem}{section}
\title[Evolution of area-decreasing maps]{Evolution of area-decreasing maps between two-dimensional Euclidean spaces}
\author{Felix Lubbe}
\date{}
\keywords{Mean curvature flow, area-decreasing maps, Euclidean space}
\subjclass[2010]{Primary 53C44; 53C42, 53A07}
\begin{document}

\begin{abstract}
	We consider the mean curvature flow of the graph of a
	smooth map $f:\Rb^2\to\Rb^2$ between two-dimensional Euclidean spaces.
	If $f$ satisfies an area-decreasing property, 
	the solution exists for all times and
	the evolving
	submanifold stays the graph of an area-decreasing map $f_t$. 
	Further, we prove uniform decay estimates for
	the mean curvature vector of the graph and all higher-order derivatives
	of the corresponding map $f_t$.
\end{abstract}

\maketitle

%%%%%%%%%%%%%%%%%%%%%%%%%%%%%%%%%%%%%%%%%%%%%%%%%%%%%%%%%%%%%%%%%%%%%%%%%%%%%%%%%%
% INTRODUCTION
%%%%%%%%%%%%%%%%%%%%%%%%%%%%%%%%%%%%%%%%%%%%%%%%%%%%%%%%%%%%%%%%%%%%%%%%%%%%%%%%%%
\section{Introduction}

Let $(M,\gMetric_M)$ and $(N,\gMetric_N)$ be complete Riemannian manifolds, 
and consider a 
smooth 
map $f:M\to N$.
Then $f$ is called \emph{strictly length-decreasing}, if
there is $\delta\in(0,1]$, such that $\|\dd f(v)\|_{\gMetric_N}\le(1-\delta)\|v\|_{\gMetric_M}$
for all $v\in\Gamma(TM)$.
The map $f$ is called 
\emph{strictly area-decreasing} if there is $\delta\in(0,1]$, such that 
\[
	\bigl\| \dd f(v) \wedge \dd f(w) \bigr\|_{\gMetric_N} \le (1-\delta) \| v \wedge w \|_{\gMetric_M}
\]
for all $v,w\in \Gamma(TM)$. In this paper, we 
deform the map $f$ by deforming its corresponding graph
\[
	\Gamma(f) \coloneqq \big\{ (x,f(x)) \in M\times N : x\in M \big\}
\]
via the mean curvature flow in the product
space $M\times N$. That is, we consider 
the system
\[
	\partial_t F_t(x) = \Hv(x,t) \,, \qquad F_0(x) = \bigl( x, f(x) \bigr) \,,
\]
where $\Hv(x,t)$ denotes the mean curvature vector of the submanifold $F_t(M)$ in $M\times N$
at $F_t(x)$. A smooth solution to the mean curvature flow for which $F_t(M)$ is a graph for $t\in[0,T_g)$
can be described completely in terms of a smooth
family of maps $f_t:M\to N$ with $f_0=f$,
where $0<T_g\le \infty$ is the maximal time for which the graphical solution exists.
In the case of long-time existence of the graphical solution (i.\,e.\ $T_g=\infty$) and convergence, we 
obtain a smooth homotopy from $f$ to a minimal map $f_{\infty}:M\to N$. 
Recall that a map between $M$ and $N$ is called \emph{minimal}, if its graph is a minimal submanifold
of the product space $M\times N$ \cite{Sch93}. \\

For a compact domain and arbitrary dimensions, several results for length- and area-decreasing maps are known
(see e.\,g.\ \cites{LL11,SHS13,SHS14,SHS15,Smo04,Wan01a,Wan01b,Wan01c} and references therein). 
For example,
if $f:M\to N$ is strictly area-decreasing,
$M$ and $N$ are space forms with $\dim M\ge 2$,
and their sectional curvatures satisfy 
\[
	\sec_M \ge |\sec_N| \,, \qquad \sec_M + \sec_N > 0 \,,
\]
Wang and Tsui proved long-time existence of the graphical mean curvature flow
and convergence of $f$ to a constant map \cite{TW04}. Subsequently, the curvature assumptions
on the manifolds were relaxed by Lee and Lee \cite{LL11} and recently by
Savas-Halilaj and Smoczyk \cite{SHS14}.

In the non-compact setting, Ecker and Huisken considered the flow of entire graphs, that is,
graphs generated by maps $f:\Rb^n\to\Rb$. The quantity which plays an
important role is essentially given
by the Jacobian of the projection map from the graph $\Gamma(f)$ to $\Rb^n$ and it satisfies a nice evolution equation.
They provided conditions under which the mean curvature flow 
of the graph exists for all time and asymptotically approaches self-expanding solutions.
\cites{EH89,EH91}.
Unfortunately, their methods cannot easily be adapted to the general higher-codimensional setting, since 
the analysis gets considerably more involved due to the complexity of the normal bundle of the graph.

Nevertheless, several results in the higher-codimensional case were obtained by
considering the Gau{\ss} map of the immersion (see e.\,g.\ \cites{Wan03b,Wan05}).
In the case of two-dimensional graphs, Chen, Li and Tian established long-time existence and convergence
results by evaluating certain angle functions on the tangent bundle \cite{CLT02}. 
Another possibility is to impose suitable smallness conditions on the differential of the defining map.
In these cases, one can show long-time existence and convergence of the mean curvature flow \cites{CCH12,CCY13,SHS15}.

Considering maps between Euclidean spaces of the same dimension,
Chau, Chen and He obtained results for strictly length-decreasing 
Lipschitz continuous maps
$f:\Rb^m\to\Rb^m$ with graphs $\Gamma(f)$ being
Lagrangian submanifolds of $\Rb^m\times\Rb^m$.
In particular, they showed short-time existence of
solutions with bounded geometry, as well as decay estimates for the mean
curvature vector and all higher-order derivatives of the defining map, which
in turn imply the long-time existence of the solution
\cite{CCH12}. Recently, the estimates were generalized by the author to Euclidean spaces of arbitrary dimension \cite{Lub16}. \\

The aim of the article at hand is to relax the length-decreasing 
assumption for maps between some Euclidean spaces.
Namely, if both domain and target are
of dimension two, we are able to show the following result.

\begin{mainthm}
	\label{thm:ThmA}
	Suppose $f:\Rb^2\to\Rb^2$ is a smooth strictly area-decreasing  
	function.
	Then the mean curvature flow with initial condition $F(x)\coloneqq\bigl(x,f(x)\bigr)$
	has a long-time smooth solution for all $t>0$
	such that the following statements hold.
	\begin{enumerate}
		\item Along the flow, the evolving surface stays the graph of a strictly area-decreasing
			map $f_t:\Rb^2\to\Rb^2$ for all $t>0$.
		\item The mean curvature vector of the graph satisfies the estimate
			\[
				t\|\Hv\|^2 \le C
			\]
			for some constant $C\ge 0$.
		\item All spatial derivatives of $f_t$ of order $k\ge 2$ satisfy the estimate
			\[
				t^{k-1} \sup_{x\in\Rb^2} \|\D^k f_t(x)\|^2 \le C_{k,\delta} \quad \text{for all} \quad k\ge 2
			\]
			and for some constants $C_{k,\delta}\ge 0$ depending only on $k$ and $\delta$. Moreover,
			\[
				\sup_{x\in\Rb^2} \| f_t(x)\|^2 \le \sup_{x\in\Rb^2} \| f(x) \|^2
			\]
			for all $t>0$.
	\end{enumerate}
	If in addition $f$ satisfies $\|f(x)\|\to 0$ as $\|x\|\to\infty$, then $\|f_t(x)\|\to 0$ smoothly
	on compact subsets of $\Rb^2$ as $t\to\infty$.
\end{mainthm}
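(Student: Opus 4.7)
The plan is to establish the convergence by combining the a priori estimates (ii) and (iii) with a compactness argument, and then to pin down the value of the resulting subsequential limit using the decay hypothesis on $f$ at infinity.

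First I would upgrade estimate (iii) to obtain a \emph{uniform} decay of the first derivative of $f_t$ as well. From (iii), one has $\sup_{x\in\Rb^2}\|f_t(x)\|\le M\coloneqq\sup_{x\in\Rb^2}\|f(x)\|$ and $\sup_{x\in\Rb^2}\|\D^2 f_t(x)\|\le\sqrt{C_{2,\delta}}\,t^{-1/2}$. Applying the classical interpolation inequality $\|\D u\|_\infty^2\le c\,\|u\|_\infty\,\|\D^2 u\|_\infty$ componentwise on $\Rb^2$ yields
\[
	\sup_{x\in\Rb^2}\|\D f_t(x)\|^2 \le c\,M\sqrt{C_{2,\delta}}\,t^{-1/2}.
\]
Hence $\sup_{x\in\Rb^2}\|\D f_t(x)\|\to 0$ at rate $t^{-1/4}$, and together with (iii) \emph{all} spatial derivatives of $f_t$ of order $\ge 1$ tend to zero uniformly on $\Rb^2$, while $f_t$ itself stays uniformly bounded by $M$.

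Consequently the family $\{f_t\}_{t\ge 1}$ is uniformly bounded in $C^k(\Rb^2)$ for every $k\ge 0$, so by Arzelà--Ascoli with a diagonal argument, every sequence $t_j\to\infty$ admits a subsequence, still denoted $t_j$, such that $f_{t_j}\to f_\infty$ smoothly on compact subsets of $\Rb^2$. Since $\|\D^\ell f_{t_j}\|_\infty\to 0$ for every $\ell\ge 1$, the limit $f_\infty$ satisfies $\D^\ell f_\infty\equiv 0$ for all $\ell\ge 1$, and therefore $f_\infty\equiv c$ for some constant vector $c\in\Rb^2$.

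The final and most delicate step is to show that necessarily $c=0$, and I expect this to be the main obstacle. Here the hypothesis $\|f(x)\|\to 0$ as $\|x\|\to\infty$ must be used in an essential way. The plan is to exploit that the components $f_t^\alpha$ of the graphical solution solve a uniformly parabolic quasilinear scalar equation of the form
\[
	\partial_t f_t^\alpha = a^{ij}(\D f_t)\,\partial_i\partial_j f_t^\alpha,
\]
where, thanks to the uniform gradient decay $\D f_t\to 0$ from the first step, the coefficients satisfy $a^{ij}(\D f_t)\to\delta^{ij}$ uniformly on $\Rb^2$. Comparing $f_t^\alpha$ with the Euclidean heat evolution of $f^\alpha$, whose solution vanishes uniformly on compacta as $t\to\infty$ because $f^\alpha\in C_0(\Rb^2)\cap L^\infty(\Rb^2)$, one obtains $\sup_{x\in\Rb^2}|f_t^\alpha(x)|\to 0$; this is where the decay of $f$ at spatial infinity is converted into temporal decay for $f_t$. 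Since this forces $c=0$ for every subsequence extracted in the previous step, uniqueness of the limit upgrades the subsequential convergence to full smooth convergence $f_t\to 0$ on compact subsets of $\Rb^2$, as claimed.
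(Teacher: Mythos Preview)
Your proposal addresses only the final convergence assertion, taking (i)--(iii) and long-time existence as already established; that is reasonable, since in the paper these are obtained from Lemmas \ref{lem:AreaDecrPres}--\ref{lem:HeightEst}. For the convergence itself, however, your route differs substantially from the paper's and leaves the decisive step unresolved.

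The paper's argument is short: from the preserved bound $\tr(\sind)\ge\varepsilon$ one gets $1+\sv_i^2\le 2/\varepsilon$ (Equation \eqref{eq:SVBound}), hence $\tgind^{ij}$ is uniformly elliptic on all of $\Rb^2\times[0,\infty)$, so each component $f_t^\alpha$ solves a uniformly parabolic scalar equation with bounded initial data vanishing at infinity. Il\cprime in's classical theorem \cite{Ili61} then gives $\sup_x|f_t^\alpha(x)|\to 0$ directly; smooth convergence on compacta follows from the derivative estimates in (iii).

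Your interpolation step and the Arzel\`a--Ascoli extraction are correct and yield the pleasant extra information that every subsequential limit is a constant. But the heart of the matter---showing that constant is $0$, equivalently that $\sup_x|f_t^\alpha(x)|\to 0$---is precisely where the paper invokes \cite{Ili61}, and your ``comparison with the Euclidean heat evolution'' does not close as written. Writing $\partial_t f^\alpha-\Delta f^\alpha=(a^{ij}-\delta^{ij})\partial_{ij}f^\alpha$ and using $a^{ij}\to\delta^{ij}$ together with $\|\D^2 f_t\|\le C t^{-1/2}$ gives a forcing term of size $o(t^{-1/2})$, whose time integral diverges, so a naive Duhamel bound does not yield decay. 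What is actually needed are Gaussian upper bounds for the fundamental solution of a uniformly parabolic operator (Aronson-type estimates), after which the same splitting argument as for the heat kernel works---but that is essentially the content of \cite{Ili61}. Note also that you do not need the gradient decay to obtain uniform parabolicity: it already follows from the preserved area-decreasing condition via \eqref{eq:SVBound}, so the interpolation step, while correct, is a detour rather than an ingredient of the proof.
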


\begin{remark}
			In terms of the second fundamental form of the graph,
			Theorem \ref{thm:ThmA} implies
			the decay estimate
			\[
				t \|\A\|^2 \le C
			\]
			for some constant $C\ge 0$ depending only on $\delta$.
\end{remark}

\begin{remark}
	\begin{enumerate}
		\item Note that any 
			strictly length-decreasing map is also strictly area-decreasing.
			Accordingly, for 
			smooth maps between two-dimensional Euclidean spaces
			the statement of \cite{Lub16}*{Theorem A}
			follows from Theorem \ref{thm:ThmA}.
		\item In the recent paper \cite{SHS16}, the case of area-decreasing maps
			between complete Riemann surfaces with bounded geometry $M$ and $N$ is treated, where $M$ is compact
			and the sectional curvatures satisfy $\min_{x\in M} \sec_M(x) \ge \sup_{x\in N} \sec_N(x)$.
	\end{enumerate}
\end{remark}

\begin{remark}
	If one considers graphs generated by functions $f:\Rb^2\to\Rb$, the same
	strategy as in the proof of Theorem \ref{thm:ThmA} can be applied. 
	To draw the conclusions of the theorem in this case, one only has to assume
	that the differential $\dd f$ of $f$ is bounded, 
	i.\,e.\ there is a constant $C\ge 0$ with
	\[
		|\dd f(v)| \le C \|v\|
	\]
	for any $v\in\Gamma(T\Rb^2)$.
	In particular,  
	the function has at most linear growth, so that it belongs to the class of functions studied in \cite{EH89}.
\end{remark}

The outline of the paper is as follows. 
In Section \ref{sec:Maps}, we 
introduce the main quantities in the graphical case which then will be deformed by
the mean curvature flow described in Section \ref{sec:MCF}.
To obtain the statements of the following sections, we would like to apply
a maximum principle. For this, we follow an idea from \cite{CCH12} to 
adapt the usual scalar maximum principle to the non-compact case.
Then, in Section \ref{sec:Pres} we establish the preservation of
the area-decreasing condition. In Section \ref{sec:APriori} we obtain estimates on $\Hv$
and all derivatives of the map defining the graph by considering functions
constructed similar to those in \cite{STW16}. 
The main theorem is proven in Section \ref{sec:Proof} and some applications 
to self-similar solutions of the mean curvature flow are given in Section \ref{sec:Appl}.

\subsection*{Acknowledgments}
The author would like to thank Andreas Savas-Halilaj for stimulating discussions.
		
%%%%%%%%%%%%%%%%%%%%%%%%%%%%%%%%%%%%%%%%%%%%%%%%%%%%%%%%%%%%%%%%%%%%%%%%%%%%%%%%%%
% MAPS BETWEEN TWO-DIMENSIONAL EUCLIDEAN SPACES
%%%%%%%%%%%%%%%%%%%%%%%%%%%%%%%%%%%%%%%%%%%%%%%%%%%%%%%%%%%%%%%%%%%%%%%%%%%%%%%%%%
\section{Maps between two-dimensional Euclidean spaces}
\label{sec:Maps}

%%%%%%%%%%%%%%%%%%%%%%%%%%%%%%%%%%%%%%%%%%%%%%%%%%%%%%%%%%%%%%%%%%%%%%%%%%%%%%%%%%
% GEOMETRY OF GRAPHS
%%%%%%%%%%%%%%%%%%%%%%%%%%%%%%%%%%%%%%%%%%%%%%%%%%%%%%%%%%%%%%%%%%%%%%%%%%%%%%%%%%
\subsection{Geometry of graphs}
\label{sec:GeomOfGraphs}
We recall the geometric quantities in a graphical
setting adopted to two-dimensional Euclidean spaces. For the setup in generic
Euclidean spaces, see e.\,g.\ \cite{Lub16}*{Section 2} and for the general setup, see e.\,g.\ \cite{SHS15}*{Section 2}.

Let $(\M,\gM)$ be the two-dimensional Euclidean space equipped with its usual flat metric.
On the product manifold $(\M\times \N,\<\cdot,\cdot\>\coloneqq\gM\times\gN)$,
the projections onto the first and second factor
\[
	\RpiM, \RpiN : \M\times \N \to \M
\]
are submersions, that is they are smooth and have maximal rank. 
A smooth map $f:\M\to \N$ defines an embedding $F:\M\to \M\times \N$ via
\[
	F(x) \coloneqq \bigl(x,f(x)\bigr)\,,\qquad x\in \M\,.
\]
The \emph{graph of $f$} is defined to be the submanifold 
\[
	\Gamma(f) \coloneqq F(\M) = \left\{ \bigl(x,f(x)\bigr) : x\in \M \right\} \subset \M\times \N \,.
\]
Since $F$ is an embedding, it induces another Riemannian metric on $\M$, given by
\[
	\gind \coloneqq F^{*}\<\cdot,\cdot\> \,.
\]
The metrics $\gM,\<\cdot,\cdot\>$ and $\gind$ are related by
\begin{align*}
	\<\cdot,\cdot\> &= \piM^{*} \gM + \piN^{*}\gN \,, \\
	\gind &= F^{*}\<\cdot,\cdot\> = \gM + f^{*}\gN \,.
\end{align*}
As in \cites{SHS13,SHS14}, let us introduce the symmetric $2$-tensors
\begin{align*}
	\sMN &\coloneqq \piM^{*}\gM - \piN^{*}\gN\,, \\
	\sind &\coloneqq F^{*}\sMN = \gM - f^{*}\gN \,.
\end{align*}
We remark that $\sMN$ is a semi-Riemannian metric of signature $(2,2)$ on
$\M\times \N$. \\

The Levi-Civita connection on $\M$ with respect to the induced
metric $\gind$ is denoted by $\nabla$ and the corresponding curvature
tensor by $\Rind$. 

%%%%%%%%%%%%%%%%%%%%%%%%%%%%%%%%%%%%%%%%%%%%%%%%%%%%%%%%%%%%%%%%%%%%%%%%%%%%%%%%%%
% SECOND FUNDAMENTAL FORM
%%%%%%%%%%%%%%%%%%%%%%%%%%%%%%%%%%%%%%%%%%%%%%%%%%%%%%%%%%%%%%%%%%%%%%%%%%%%%%%%%%
\subsection{Second fundamental form}

The \emph{second fundamental tensor} of the graph $\Gamma(f)$ is the section 
$\A\in\Gamma\bigl(T^{\perp}\M \otimes \Sym(T^*\M\otimes T^*\M)\bigr)$
defined as
\[
	\A(v,w) \coloneqq \bigl( \nabla \dd F\bigr)(v,w) \coloneqq \D_{\dd F(v)} \dd F(w) - \dd F(\nabla_v w) \,,
\]
where $v,w\in \Gamma(T\M)$ and where we denote the connection on 
$F^{*}T(\M\times \N)\otimes T^{*}\M$ induced by the Levi-Civita connection also by $\nabla$.
The trace of $\A$ with respect to the metric $\gind$ is called the
\emph{mean curvature vector field} of $\Gamma(f)$ and it will be denoted by
\[
	\Hv \coloneqq \tr \A \,.
\]
Let us denote the evaluation of the second fundamental
form (resp.\ mean curvature vector) in the direction of a vector $\xi\in\Gamma\bigl(F^*T(\M\times\N)\bigr)$ by
\[
	\A_{\xi}(v,w) \coloneqq \bigl\<\A(v,w),\xi\bigr\> \qquad \text{resp.} \qquad \Hv_{\xi} \coloneqq \bigl\<\Hv, \xi\bigr\> \,.
\]
Note that $\Hv$ is a section in the normal bundle of the graph.
If $\Hv$ vanishes identically, the graph is said to be minimal.
A smooth map $f:\M\to \N$ is called \emph{minimal},
if its graph $\Gamma(f)$ is a minimal submanifold of the product
space $(\M\times \N,\<\cdot,\cdot\>)$. \\

On the submanifold, the \emph{Gau{\ss} equation}
\begin{equation}
	\label{eq:Gauss}
	\Rind(u_1,v_1,u_2,v_2) = \bigl\< \A(u_1,u_2), \A(v_1,v_2) \bigr\> - \bigl\< \A(u_1,v_2), \A(v_1,u_2) \bigr\>
\end{equation}
and the \emph{Codazzi equation}
\[
	(\nabla_u \A)(v,w) - (\nabla_v \A)(u,w) = - \dd F\bigl(\Rind(u,v)w\bigr)
\]
hold, where the induced connection on the bundle $F^*T(\M\times \N)\otimes T^*\M \otimes T^*\M$
is defined as
\[
	(\nabla_u \A)(v,w) \coloneqq \D_{\dd F(u)}(\A(v,w)) - \A(\nabla_uv,w) - \A(v,\nabla_uw) \,.
\]

%%%%%%%%%%%%%%%%%%%%%%%%%%%%%%%%%%%%%%%%%%%%%%%%%%%%%%%%%%%%%%%%%%%%%%%%%%%%%%%%%%
% SINGULAR VALUE DECOMPOSITION
%%%%%%%%%%%%%%%%%%%%%%%%%%%%%%%%%%%%%%%%%%%%%%%%%%%%%%%%%%%%%%%%%%%%%%%%%%%%%%%%%%
\subsection{Singular value decomposition}
\label{sec:SVD}

We recall the singular value decomposition theorem for the two-dimensional case (see e.\,g.\ \cite{SHS13}*{Section 3.2} for the general setup). \\

Fix a point $x\in \M$, and let
\[
	\sv_1^2(x) \le \sv_2^2(x)
\]
be the eigenvalues of $f^{*}\gN$ with respect to $\gM$. The
values
$0\le \sv_1(x)\le \sv_2(x)$ are called the
\emph{singular values} of the differential $\dd f$ of $f$ and give rise
to continuous functions on $\M$. 
At the
point $x$ consider an orthonormal basis $\{\alpha_{1},\alpha_{2}\}$
with respect to $\gM$ which diagonalizes $f^{*}\gN$. Moreover, at $f(x)$ 
consider a basis $\{\beta_{1},\beta_{2}\}$ that is
orthonormal
with respect to $\gN$, such that
\[
	\dd f(\alpha_{1}) = \sv_1(x) \beta_{1} \,, \qquad \dd f(\alpha_{2}) = \sv_2(x) \beta_{2}
\]
This procedure is called the \emph{singular value decomposition} of the differential $\dd f$. \\

Now let us construct a special basis for the tangent and the normal
space of the graph in terms of the singular values. The vectors
\[
	\widetilde{e}_{1} \coloneqq \frac{1}{\sqrt{1+\sv_{1}^{2}(x)}}\bigl( \alpha_{1} \oplus \sv_{1}(x)\beta_{1} \bigr) \quad \text{and} \quad \widetilde{e}_{2} \coloneqq \frac{1}{\sqrt{1+\sv_{2}^{2}(x)}}\bigl( \alpha_{2} \oplus \sv_{2}(x)\beta_{2} \bigr) 
\]
form an orthonormal basis with respect to the metric $\<\cdot,\cdot\>$ of 
the tangent space $\dd F(T_{x}\M)$ of the graph $\Gamma(f)$ at $x$. It follows
that with respect to the induced metric $\gind$, the vectors
\[
	e_1 \coloneqq \frac{1}{\sqrt{1+\sv_1^2(x)}} \alpha_1 \qquad \text{and} \qquad e_2 \coloneqq \frac{1}{\sqrt{1+\sv_2^2(x)}} \alpha_2 
\]
form an orthonormal basis of $T_x\M$.
Moreover,
the vectors
\[
	\xi_{1} \coloneqq \frac{1}{\sqrt{1+\sv_{1}^{2}(x)}}\bigl(-\sv_{1}(x)\alpha_{1} \oplus \beta_{1}\bigr) \quad \text{and} \quad
	\xi_{2} \coloneqq \frac{1}{\sqrt{1+\sv_{2}^{2}(x)}}\bigl(-\sv_{2}(x)\alpha_{2} \oplus \beta_{2}\bigr)
\]
form an orthonormal basis with respect to $\<\cdot,\cdot\>$ of the
normal space $T^{\perp}_{x}\M$ of the graph $\Gamma(f)$ at the point
$x$. From the formulae above, we deduce that
\[
	\sMN\bigl(\widetilde{e}_{i},\widetilde{e}_{j}\bigr) = \sind(e_i,e_j) = \frac{1-\sv_{i}^{2}(x)}{1+\sv_{i}^{2}(x)}\delta_{ij} \,, \qquad 1\le i,j\le 2 \,.
\]
Therefore, the eigenvalues of the $2$-tensor $\sind$ with respect to $\gind$
are given by
\begin{equation}
	\label{eq:sSV}
	\frac{1-\sv_{1}^{2}(x)}{1+\sv_{1}^{2}(x)} \ge \frac{1-\sv_{2}^{2}(x)}{1+\sv_{2}^{2}(x)} \,.
\end{equation}
Moreover,
\begin{equation}
	\label{eq:sOnNormalBdl}
	\sMN(\xi_{i},\xi_{j}) = - \frac{1-\sv_{i}^{2}(x)}{1+\sv_{i}^{2}(x)}\delta_{ij} \,,\qquad 1\le i,j\le 2\,,
\end{equation}
and
\[
	\sMN(\widetilde{e}_{i},\xi_{j}) = - \frac{2\sv_{i}(x)}{1+\sv_{i}^{2}(x)} \delta_{ij} \,,\qquad 1\le i,j\le 2 \,.
\]

%%%%%%%%%%%%%%%%%%%%%%%%%%%%%%%%%%%%%%%%%%%%%%%%%%%%%%%%%%%%%%%%%%%%%%%%%%%%%%%%%%
% MEAN CURVATURE FLOW IN EUCLIDEAN SPACES
%%%%%%%%%%%%%%%%%%%%%%%%%%%%%%%%%%%%%%%%%%%%%%%%%%%%%%%%%%%%%%%%%%%%%%%%%%%%%%%%%%
\section{Mean curvature flow in Euclidean space}
\label{sec:MCF}

Let $I\coloneqq[0,T)$ for some $T>0$ and assume $f_0:\RM\to \RN$ to be a smooth map. We say that a family
of maps $F:\RM\times I \to \RM\times \RN$ evolves under the mean curvature flow, if for all $x\in \RM$ 
\begin{equation}
	\label{eq:MCF}
	\begin{cases} \partial_t F(x,t) = \Hv(x,t) \,, \\ F(x,0) = \bigl(x,f_0(x)\bigr) \,. \end{cases}
\end{equation}
This system can also be described as follows. As in \cite{CCH12}*{Section 5}, let us consider the non-parametric
mean curvature flow equation for $f:\RM\times[0,T)\to\RN$, given by the quasilinear system
\begin{equation}
	\label{eq:pMCF}
	\begin{cases} \partial_t f(x,t) = \sum_{i,j=1}^2 \tgind^{ij} \partial^2_{ij} f(x,t) \,, \\ f(x,0) = f_0(x) \,, \end{cases}
\end{equation}
where $\tgind^{ij}$ are the components of the inverse of $\tgind \coloneqq \RgM + f_t^*\RgN$, where
here we have set $f_t(x)\coloneqq f(x,t)$.
If
\eqref{eq:pMCF} has a smooth solution $f:\RM\times[0,T)\to\RN$, then the mean curvature flow \eqref{eq:MCF}
has a smooth solution $F:\RM\times[0,T)\to\RM\times\RN$ given by the family of graphs
\[
	\Gamma\bigl(f(\cdot,t)\bigr) = \bigl\{ \bigl(x,f(x,t)\bigr) : x\in\RM \bigr\} \,,
\]
up to tangential diffeomorphisms (see e.\,g.\ \cite{Bra78}*{Chapter 3.1}).

In the sequel, if there is no confusion, we will also use the notation $F_t(x)\coloneqq F(x,t)$ as
well as $f_t(x)\coloneqq f(x,t)$. \\

For \eqref{eq:pMCF}, we have the following short-time existence result.

\begin{theorem}[{\cite{CCH12}*{Proposition 5.1 for $m=n=2$}}]
	\label{thm:ShortTimeEx}
	Suppose $f_0:\RM\to\RN$ is a smooth function, such that for each $l\ge 1$ we have 
	$\sup_{x\in\RM}\|\D^l f_0(x)\|\le C_l$ for some finite constants $C_l$.
	Then \eqref{eq:pMCF} has a short-time smooth solution $f$ on $\RM\times[0,T)$
	for some $T>0$ with initial condition $f_0$,
	such that $\sup_{x\in\RM}\|\D^lf_t(x)\| < \infty$ for every $l\ge 1$ and $t\in[0,T)$.
\end{theorem}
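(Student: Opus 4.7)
The plan is to view \eqref{eq:pMCF} as a quasilinear parabolic system for $f:\RM\times[0,T)\to\RN$ and prove short-time existence by a contraction-mapping argument in a parabolic Hölder space of globally bounded maps. Because $\tgind=\RgM+f^{*}\RgN$ has eigenvalues $1+\sv_i^2\ge 1$, the coefficients $\tgind^{ij}$ of the principal part are uniformly positive definite, and the hypothesis $\sup_{\RM}\|\D f_0\|\le C_1$ additionally provides an upper bound on these eigenvalues. Hence the system is uniformly parabolic in a $C^{1}$-neighbourhood of $f_0$, which is the key analytic input.

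First, I would fix $\alpha\in(0,1)$ and work in the parabolic Hölder space $\mathcal{X}_T\coloneqq C^{2+\alpha,1+\alpha/2}(\RM\times[0,T];\RN)$ of maps with globally bounded Hölder norm. For any $g\in\mathcal{X}_T$ staying sufficiently close to $f_0$ in the $C^{2+\alpha}$-norm, the coefficients $\tgind^{ij}(\D g)$ inherit Hölder regularity and satisfy uniform ellipticity bounds, so that the linearised Cauchy problem
\[
  \partial_t \tilde{f} = \tgind^{ij}(\D g)\,\partial^{2}_{ij}\tilde{f}\,,\qquad \tilde{f}(\cdot,0)=f_0
\]
admits a unique bounded solution $\Phi(g)\in\mathcal{X}_T$ by the standard Schauder theory for linear parabolic systems on $\Rb^n$. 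A short computation shows that $g\mapsto\tgind^{ij}(\D g)$ is Lipschitz in the relevant Hölder norm on a small ball around $\D f_0$, and combining this with the linear Schauder estimates proves that $\Phi$ is a contraction on a suitable closed ball of $\mathcal{X}_T$ provided $T$ is chosen sufficiently small. The unique fixed point is the desired $C^{2+\alpha}$-solution of \eqref{eq:pMCF}.

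Once a $C^{2+\alpha}$-solution is available, I would bootstrap to smoothness with the stated uniform bounds on all $\D^l f_t$. Differentiating \eqref{eq:pMCF} spatially produces a linear parabolic equation for $\D^l f$ whose coefficients are built from $\D^k f$ with $k\le l+1$; applying parabolic Schauder estimates iteratively, and using the smoothness and derivative bounds of $f_0$ to start the induction, yields $f\in C^{\infty}(\RM\times[0,T))$ together with $\sup_{x\in\RM}\|\D^l f_t(x)\|<\infty$ for every $l\ge 1$ and every $t\in[0,T)$.

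The main obstacle lies in the non-compactness of $\RM$: one cannot invoke short-time existence on a closed manifold, and every function space appearing in the argument must consist of globally bounded maps with globally uniform Hölder seminorms. Consequently, the linear parabolic theory used inside the fixed-point argument must be the version on the whole space $\Rb^n$, with constants independent of the spatial domain, and the smallness condition controlling the contraction has to hold uniformly in $x\in\RM$ rather than on compact subsets; otherwise the uniform parabolicity could degenerate near infinity and the whole scheme would break down. This is precisely the point at which one leans on \cite{CCH12}*{Proposition~5.1}, whose proof is tailored to this non-compact setting.
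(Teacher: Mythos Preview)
The paper does not give its own proof of this statement; it is simply quoted from \cite{CCH12}*{Proposition~5.1} and used as a black box. So there is no argument in the paper against which to compare your proposal.

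That said, your sketch is a reasonable outline of the standard contraction-mapping approach to short-time existence for uniformly quasilinear parabolic systems on $\Rb^n$, and is broadly in the spirit of what \cite{CCH12} carries out. One technical point deserves attention: the hypothesis bounds $\D^l f_0$ only for $l\ge 1$, so $f_0$ itself may be unbounded (a linear map already satisfies the assumptions). Consequently your fixed-point space $\mathcal{X}_T=C^{2+\alpha,1+\alpha/2}(\RM\times[0,T];\RN)$ cannot be taken with a finite $L^\infty$-norm on $f$ itself. The usual remedy is to run the contraction argument for $h\coloneqq f-f_0$, which satisfies a system of the same type with bounded initial datum $h(\cdot,0)=0$ and an inhomogeneous term built from the (bounded) derivatives of $f_0$; alternatively one uses a homogeneous H\"older space controlling only derivatives. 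This is easily fixed, but as written the space you name need not contain the solution you are looking for.
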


In this paper, we will consider a special kind of solution to \eqref{eq:MCF}.
\begin{definition}
	Let $F_t(x)$ be a smooth solution to the system \eqref{eq:MCF} on $\RM\times[0,T)$ for some
	$0<T\le\infty$, such that for each $t\in[0,T)$ and non-negative integer $k$, 
	the submanifold $F_t(\RM)\subset\RM\times\RN$ satisfies
	\begin{gather}
		\label{eq:BddGeom1}
		\sup_{x\in\RM} \|\nabla^k \A(x,t) \| < \infty \,, \\
		\label{eq:BddGeom2}
		C_1(t) \RgM \le \gind \le C_2(t) \RgM \,,
	\end{gather}
	where $C_1(t)$ and $C_2(t)$ for each $t\in[0,T)$ are finite, positive constants
	depending only on $t$. Then we will say that the family of embeddings 
	$\{F_t\}_{t\in I}$
	has \emph{bounded geometry}.
\end{definition}

\begin{definition}
	Let $f_t(x)$ be a smooth solution to the system \eqref{eq:pMCF} on $\RM\times[0,T)$ for
	some $0<T\le\infty$, such that for each $t\in[0,T)$ and positive integer $k$ the estimate 
	\[
		\sup_{x\in\RM} \| \D^k f_t(x) \| < \infty
	\]
	holds. Then we will say that $f_t(x)$ has \emph{bounded geometry} for every $t\in[0,T)$.
\end{definition}

%%%%%%%%%%%%%%%%%%%%%%%%%%%%%%%%%%%%%%%%%%%%%%%%%%%%%%%%%%%%%%%%%%%%%%%%%%%%%%%%%%
% GRAPHS
%%%%%%%%%%%%%%%%%%%%%%%%%%%%%%%%%%%%%%%%%%%%%%%%%%%%%%%%%%%%%%%%%%%%%%%%%%%%%%%%%%
\subsection{Graphs}
We recall some important notions in the graphic case, where we follow the presentation in \cite{SHS14}*{Section 3.1}.

Let $f_0:\RM\to\RN$ denote a smooth map, such that 
$ \sup_{x\in\RM}\| \D^l f_0(x) \| \le C_l$ for some finite constants $C_l$, $l\ge 1$.
Then Theorem \ref{thm:ShortTimeEx} ensures that the system
\eqref{eq:pMCF} has a short-time solution with initial data $f_0(x)$
on a time interval $[0,T)$ for some positive maximal time $T>0$. Further, there
is a diffeomorphism $\phi_t:\RM\to\RM$, such that 
\begin{equation}
	\label{eq:DiffeoRel}
	F_t\circ \phi_t(x)=(x,f_t(x)) \,,
\end{equation}
where $F_t(x)$ is a solution of \eqref{eq:MCF}.

To obtain the converse of this statement,
let $\Omega_{\RM}$ be the volume form on $\RM$ and extend it to a parallel $2$-form on $\RM\times\RN$
by pulling it back via the natural projection onto the first factor $\RpiM:\RM\times\RN\to\RM$, that is, consider
the $2$-form $\RpiM^*\Omega_{\RM}$. Define the time-dependent smooth function $u:\RM\times[0,T)\to\Rb$,
given by
\[
	u \coloneqq \star \Omega_t \,,
\]
where $\star$ is the Hodge star operator with respect to the induced metric $\gind$ and
\[
	\Omega_t \coloneqq F_t^*\bigl( \RpiM^*\Omega_{\RM} \bigr) = (\RpiM \circ F_t)^* \Omega_{\RM}\,.
\]
The function $u$ is the Jacobian of the projection map from $F_t(\RM)$ to $\RM$. From
the implicit mapping theorem it follows that $u>0$ if and only if there is a diffeomorphism
$\phi_t:\RM\to\RM$ and a map $f_t:\RM\to\RN$, such that \eqref{eq:DiffeoRel} holds,
i.\,e.\ $u$ is positive precisely if the solution of the mean curvature flow remains a graph.
By Theorem \ref{thm:ShortTimeEx}, 
the solution will stay a graph at least in a short time
interval $[0,T)$.

%%%%%%%%%%%%%%%%%%%%%%%%%%%%%%%%%%%%%%%%%%%%%%%%%%%%%%%%%%%%%%%%%%%%%%%%%%%%%%%%%%
% PARABOLIC SCALING
%%%%%%%%%%%%%%%%%%%%%%%%%%%%%%%%%%%%%%%%%%%%%%%%%%%%%%%%%%%%%%%%%%%%%%%%%%%%%%%%%%
\subsection{Parabolic scaling}

For any $\tau>0$ and $(x_0,t_0)\in\RM\times[0,T)$, consider the change of variables
\[
	\xScaled \coloneqq \tau (x-x_0) \,, \qquad \tScaled \coloneqq \tau^2(t-t_0) \,, \qquad \fys{\tau} \coloneqq \tau \bigl( f(x,t) - f(x_0,t_0) \bigr) \,,
\]
which we call the \emph{parabolic scaling by $\tau$ at $(x_0,t_0)$}. Let us denote the derivative with
respect to $y$ by $\Dy$, and let $\tgind_{\tau}$ be the scaled metric as well as
$\A_{\tau}$ be the scaled second fundamental form. We calculate
\[
	(\Dy^k \fScaled{\tau})(\xScaled,\tScaled) = \tau^{1-k}(\D^kf)(x,t) \,,
\]
which implies
\[
	\tgind_{\tau\,|(\xScaled,\tScaled)} = \tgind_{|(x,t)} \qquad \text{and} \qquad \A_{\tau\,|(\xScaled,\tScaled)} = \frac{1}{\tau} \A_{|(x,t)} \,, 
\]
so that $\fys{\tau}$ satisfies equation \eqref{eq:pMCF} in the sense that
\[
	\frac{\partial \fScaled{\tau}}{\partial \tScaled}(\xScaled,\tScaled) = \sum_{i,j=1}^2 \tgind_{\tau}^{ij} \frac{\partial^2 \fScaled{\tau}}{\partial \xScaled^i \partial \xScaled^j}(\xScaled,\tScaled) \,.
\]

%%%%%%%%%%%%%%%%%%%%%%%%%%%%%%%%%%%%%%%%%%%%%%%%%%%%%%%%%%%%%%%%%%%%%%%%%%%%%%%%%%
% EVOLUTION EQUATIONS
%%%%%%%%%%%%%%%%%%%%%%%%%%%%%%%%%%%%%%%%%%%%%%%%%%%%%%%%%%%%%%%%%%%%%%%%%%%%%%%%%%
\subsection{Evolution equations}

Let us recall the evolution equation of the tensor $\sind$ in the two-dimensional
setting (which is basically calculated in \cite{SHS14}*{Lemma 3.1}),
as well as the evolution equation for its trace.

\begin{lemma}
	\label{lem:sEvolEq}
	Under the mean curvature flow, the evolution of the tensor $\sind$ for
	$t\in[0,T)$ is given by the formula
	\begin{align*}
		\left( \nabla_{\dt} \sind - \Lap \sind\right)(v,w) = & - \sind (\Ric v, w ) - \sind( v, \Ric w ) \\
			& - 2 \sum_{k=1}^2 \RsMN\bigl( \A(e_k,v), \A(e_k,w) \bigr) \,,
	\end{align*}
	where $\{e_1,e_2\}$ is any orthonormal frame with respect to $\gind$
	and where the \emph{Ricci operator} is given by
	\[
		\Ric v \coloneqq - \sum_{k=1}^2 \Rind(e_k,v) e_k \,.
	\]
\end{lemma}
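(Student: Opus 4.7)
The tensor $\sind = F^*\sMN$ is the pullback of a parallel tensor on the flat ambient space $\M \times \N$, and my strategy is to exploit this by differentiating the identity $\sind(v,w) = \sMN(\dd F(v), \dd F(w))$ in both space and time, using at each step the Gauss decomposition $\D_{\dd F(u)} \dd F(v) = \dd F(\nabla_u v) + \A(u, v)$. Since $\sMN$ is parallel on the ambient, the first spatial derivative immediately reduces to second-fundamental-form terms:
\[
	(\nabla_u \sind)(v, w) = \sMN\bigl(\A(u,v), \dd F(w)\bigr) + \sMN\bigl(\dd F(v), \A(u,w)\bigr) .
\]

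For the Laplacian, I would differentiate this expression once more in an orthonormal frame $\{e_k\}$ with respect to $\gind$ and contract. A second application of the Gauss formula produces several types of terms: quadratic contributions of the form $\sMN(\A(e_k,v), \A(e_k,w))$ coming from differentiating the $\dd F$ entries, trace terms involving $(\nabla_v \A)(e_k, e_k)$, and Weingarten-type contributions arising from the tangential part of $\D_{e_k} \A(e_k, \cdot)$. The Codazzi equation, which in the flat ambient reduces to $(\nabla_u \A)(v,w) = (\nabla_v \A)(u,w)$, lets me rewrite the trace terms so that they involve $\nabla_v^{\perp} \Hv$, and commuting the remaining pair of covariant derivatives via the Ricci identity produces a curvature correction that, after contraction against the frame, gives exactly the two Ricci pieces $-\sind(\Ric v, w) - \sind(v, \Ric w)$.

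For the time derivative, using $\partial_t F = \Hv$ and the parallelism of $\sMN$ I get
\[
	(\nabla_{\dt} \sind)(v, w) = \sMN\bigl(\D_v \Hv, \dd F(w)\bigr) + \sMN\bigl(\dd F(v), \D_w \Hv\bigr) ,
\]
after which I would split $\D_v \Hv$ into its tangential and normal parts. The normal part $\nabla_v^{\perp} \Hv$ exactly cancels the $\nabla^{\perp} \Hv$ trace terms appearing in $\Lap \sind$, while the tangential, Weingarten-type part combines with the analogous pieces from the Laplacian computation to reproduce the quadratic $\sMN(\A, \A)$ contribution with the claimed total coefficient.

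The main obstacle is the careful bookkeeping: every application of the Gauss formula produces both tangential and normal pieces, and one must track all of them to verify the cancellations between $\nabla_{\dt} \sind$ and $\Lap \sind$ without introducing spurious cross-terms. After the dust settles, the only surviving contributions are the two Ricci pieces (from the Ricci identity) and the quadratic $\sMN(\A, \A)$ pieces (with the factor of $2$ emerging from the symmetrization in $v$ and $w$ already present in $\nabla \sind$), which is precisely the stated formula. As the paper remarks, the analogous calculation is carried out in \cite{SHS14}*{Lemma 3.1}, and specializing to $\dim \M = 2$ here simply restricts the frame indices to $\{1, 2\}$.
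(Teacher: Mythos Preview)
Your proposal is correct and follows the standard route for computing the evolution of a pulled-back parallel tensor; this is precisely the computation carried out in \cite{SHS14}*{Lemma 3.1}, which the paper simply cites rather than reproducing. In other words, the paper gives no proof of its own here, and your outline is exactly the argument behind the reference.
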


\begin{corollary}
	\label{cor:TrEvol}
	Under the mean curvature flow, the evolution equation of the trace of the tensor $\sind$
	is given by
	\[
		\left( \dt - \Lap \right) \tr(\sind) = - 2 \sum_{k,l=1}^2 \left( \RsMN - \frac{1-\sv_k^2}{1+\sv_k^2} \RgMN \right)\bigl( \A(e_k,e_l), \A(e_k,e_l) \bigr) \,,
	\]
	where $\{e_1,e_2\}$ denotes the orthonormal frame field with respect to $\gind$
	constructed in Section \ref{sec:SVD}.
\end{corollary}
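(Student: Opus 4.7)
The plan is to take the formula of Lemma~\ref{lem:sEvolEq} and trace it with respect to the evolving metric $\gind$. Writing $\tr(\sind) = g^{ij}\sind_{ij}$ in local coordinates and using that $g^{ij}$ is parallel with respect to $\nabla$, I would split
\[
	(\dt - \Lap)\tr(\sind) = (\dt g^{ij})\sind_{ij} + g^{ij}\bigl[\nabla_{\dt}\sind - \Lap\sind\bigr]_{ij}.
\]
Under the mean curvature flow, $\dt\gind_{ij} = -2\bigl\<\Hv,\A(e_i,e_j)\bigr\>$, so the first term produces $2\sum_{k,l}\bigl\<\Hv,\A(e_k,e_l)\bigr\>\sind(e_k,e_l)$ in a $\gind$-orthonormal frame $\{e_k\}$, while the second term is evaluated by substituting the right-hand side of Lemma~\ref{lem:sEvolEq}.

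Next, I would specialize to the orthonormal frame $\{e_1,e_2\}$ constructed in Section~\ref{sec:SVD}, in which $\sind$ is diagonal with eigenvalues $\frac{1-\sv_k^2}{1+\sv_k^2}$. This immediately delivers the desired $\RsMN$-term from the lemma. The remaining task is to show that the sum of the $\dt g^{ij}$-contribution and the two Ricci terms produces exactly $2\sum_{k,l}\frac{1-\sv_k^2}{1+\sv_k^2}\|\A(e_k,e_l)\|^2$, which together with the $\RsMN$-term then yields the asserted expression $-2\sum_{k,l}\bigl(\RsMN - \tfrac{1-\sv_k^2}{1+\sv_k^2}\RgMN\bigr)(\A(e_k,e_l),\A(e_k,e_l))$.

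For this algebraic step, I would exploit the fact that in two dimensions the Ricci operator is multiplication by the Gauss curvature, $\Ric = K\,\mathrm{id}$, so that $\sum_l \sind(\Ric e_l, e_l) = K\tr(\sind)$. The Gauss equation~\eqref{eq:Gauss} then gives $K = \bigl\<\A(e_1,e_1),\A(e_2,e_2)\bigr\> - \|\A(e_1,e_2)\|^2$. Expanding $\bigl\<\Hv,\A(e_k,e_k)\bigr\> = \|\A(e_k,e_k)\|^2 + \bigl\<\A(e_1,e_1),\A(e_2,e_2)\bigr\>$ via $\Hv = \A(e_1,e_1) + \A(e_2,e_2)$, the off-diagonal inner-product terms cancel against the Gauss-curvature contribution, and what remains rearranges, using $\A(e_1,e_2) = \A(e_2,e_1)$, into the target $\sum_{k,l}\frac{1-\sv_k^2}{1+\sv_k^2}\|\A(e_k,e_l)\|^2$. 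The main obstacle is the careful bookkeeping of signs and prefactors, especially in the Ricci convention $\Ric v = -\sum_k \Rind(e_k,v)e_k$ and in the combination of the factor-of-two contributions from $(\dt g^{ij})\sind_{ij}$ and the two symmetric Ricci terms in Lemma~\ref{lem:sEvolEq}; no geometric input beyond Lemma~\ref{lem:sEvolEq} and the Gauss equation is required.
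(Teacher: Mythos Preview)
Your proposal is correct and follows essentially the same approach as the paper: trace Lemma~\ref{lem:sEvolEq}, pick up the extra $(\dt g^{ij})\sind_{ij}$ term from the time-dependence of the metric, and use the Gau{\ss} equation to convert the Ricci contributions into second-fundamental-form terms. The only organizational difference is that the paper computes $\sind(\Ric e_k,e_k) = \sind(e_k,e_k)\bigl(-\sum_l\|\A(e_k,e_l)\|^2 + \langle\Hv,\A(e_k,e_k)\rangle\bigr)$ directly from Gau{\ss}, so that the $\langle\Hv,\A\rangle$ terms cancel immediately against the metric-variation term, whereas you pass through the two-dimensional identity $\Ric = K\,\mathrm{id}$ and then expand $K$; both routes are the same computation and your cancellation of the cross terms $\langle\A(e_1,e_1),\A(e_2,e_2)\rangle$ is exactly what the paper's more compact bookkeeping encodes.
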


\begin{proof}
	From the Gau{\ss} Equation \eqref{eq:Gauss} we obtain
	\begin{align*}
		\sind (\Ric e_k, e_k) ={} &- \sind(e_k,e_k) \sum_{l=1}^2 \RgMN\bigl( \A(e_k,e_l), \A(e_k,e_l) \bigr) \\
			& + \sind(e_k,e_k) \RgMN\bigl( \Hv, \A(e_k,e_k) \bigr) \,.
	\end{align*}
	Further, since
	\[
		\dt \tr(\sind) = 2 \sum_{k=1}^2 \RgMN\bigl( \Hv, \A(e_k,e_k) \bigr) \sind(e_k,e_k) + \sum_{k=1}^2 (\nabla_{\dt}\sind)(e_k,e_k) \,,
	\]
	the claim follows from Lemma \ref{lem:sEvolEq}.
\end{proof}

In the two-dimensional setting at hand, we can rewrite the evolution equation for the trace.

\begin{lemma}
	\label{lem:TrEvol}	
	Under the mean curvature flow, the trace of the tensor $\sind$ satisfies
	\begin{align*}
		\left( \dt - \Lap \right) \tr(\sind) = 2 \|\A\|^2 \tr(\sind) &- \frac{1}{2} \frac{\|\nabla\tr(\sind)\|^2}{\tr(\sind)} \\
			& + \frac{2}{\tr(\sind)} \sum_{k=1}^2 \left( \frac{2\sv_2}{1+\sv_2^2} \A_{1k}^1 + \frac{2\sv_1}{1+\sv_1^2} \A_{2k}^2 \right)^2 \,.
	\end{align*}
\end{lemma}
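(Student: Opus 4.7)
The plan is to fix a point $x\in\RM$ and to work in the orthonormal frames $\{e_1,e_2\}$ and $\{\xi_1,\xi_2\}$ from Section \ref{sec:SVD}, which simultaneously diagonalize $\sind$ and $\RsMN|_{T^{\perp}\RM}$. Abbreviating $a_k := (1-\sv_k^2)/(1+\sv_k^2)$, so that $\tr(\sind)=a_1+a_2$, and expanding $\A(e_k,e_l)=\sum_{\alpha}\A_{kl}^{\alpha}\xi_{\alpha}$, the identity $\RsMN(\xi_\alpha,\xi_\beta)=-a_\alpha\delta_{\alpha\beta}$ from \eqref{eq:sOnNormalBdl} rewrites the right-hand side of Corollary \ref{cor:TrEvol} as $2\sum_{k,l,\alpha}(a_k+a_\alpha)(\A_{kl}^\alpha)^2$. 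Pulling out the target quantity $2\|\A\|^2\tr(\sind)=2(a_1+a_2)\sum_{k,l,\alpha}(\A_{kl}^\alpha)^2$, the cross terms with $\alpha\neq k$ cancel, and in dimension two only the diagonal pairs $(k,\alpha)=(1,1),(2,2)$ survive, leaving
\[
	(\dt-\Lap)\tr(\sind) = 2\|\A\|^2\tr(\sind) + 2(a_1-a_2)\sum_{l=1}^{2}\bigl((\A_{1l}^1)^2-(\A_{2l}^2)^2\bigr).
\]

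Next I would compute $\nabla\tr(\sind)$ in the same frame, extended to be normal at $x$. Because $\RsMN$ is parallel on the flat product $\RM\times\RN$, the usual formula for the covariant derivative of the pullback of a parallel tensor yields
\[
	(\nabla_{e_k}\sind)(e_i,e_j) = \RsMN\!\bigl(\A(e_k,e_i),\dd F(e_j)\bigr) + \RsMN\!\bigl(\dd F(e_i),\A(e_k,e_j)\bigr).
\]
Substituting $\RsMN(\widetilde{e}_j,\xi_\alpha)=-\tfrac{2\sv_j}{1+\sv_j^2}\delta_{j\alpha}$ and tracing over $i=j$ produces
\[
	\nabla_k\tr(\sind) = -\frac{4\sv_1}{1+\sv_1^2}\A_{k1}^1 - \frac{4\sv_2}{1+\sv_2^2}\A_{k2}^2.
\]

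Finally the two computations must be matched algebraically. Setting $p:=\frac{2\sv_1}{1+\sv_1^2}$, $q:=\frac{2\sv_2}{1+\sv_2^2}$, $x_k:=\A_{k1}^1$, $y_k:=\A_{k2}^2$, and using the symmetry $\A_{1k}^1=\A_{k1}^1$, a short expansion gives
\[
	2\bigl(q x_k+p y_k\bigr)^2 - \tfrac{1}{2}\bigl(2 p x_k+2 q y_k\bigr)^2 = 2(q^2-p^2)(x_k^2-y_k^2).
\]
The identities $p^2=1-a_1^2$ and $q^2=1-a_2^2$, immediate from the definitions, then convert $q^2-p^2$ into $a_1^2-a_2^2=(a_1-a_2)\tr(\sind)$, so that dividing by $\tr(\sind)$ reproduces exactly the residue $2(a_1-a_2)\sum_l\bigl((\A_{1l}^1)^2-(\A_{2l}^2)^2\bigr)$ obtained in the first step. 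I expect the main obstacle to be not computational depth but the recognition of this last identity: it is the genuinely two-dimensional feature that allows the cross-quadratic in $p$ and $q$ to be absorbed into the single gradient term $\tfrac{1}{2}\|\nabla\tr(\sind)\|^2/\tr(\sind)$, and it is precisely what would fail in higher dimension or codimension.
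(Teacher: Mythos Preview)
Your argument is correct. The paper itself does not give a proof of this lemma: it simply cites \cite{STW16}*{Eqs.\ (3.17) and (3.18)}. What you have written is a self-contained derivation starting from Corollary~\ref{cor:TrEvol} and the frame identities of Section~\ref{sec:SVD}, and each step checks out --- the reduction of the right-hand side of Corollary~\ref{cor:TrEvol} to $2\|\A\|^2\tr(\sind)+2(a_1-a_2)\sum_l\bigl((\A_{1l}^1)^2-(\A_{2l}^2)^2\bigr)$, the gradient formula $\nabla_k\tr(\sind)=-2p\,\A_{k1}^1-2q\,\A_{k2}^2$, and the algebraic identity $q^2-p^2=(a_1-a_2)\tr(\sind)$ that closes the loop. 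Your proof thus supplies what the paper outsources, and has the added benefit of making transparent exactly where the two-dimensionality of both factors is used (namely in the cancellation of the off-diagonal $(k,\alpha)$ pairs and in the single cross-term $4pq\,x_ky_k$ appearing identically in both quadratics).
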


\begin{proof}
	This is \cite{STW16}*{Eqs.\ (3.17) and (3.18)}.
\end{proof}

%%%%%%%%%%%%%%%%%%%%%%%%%%%%%%%%%%%%%%%%%%%%%%%%%%%%%%%%%%%%%%%%%%%%%%%%%%%%%%%%%%
% EVOLUTIOn OF SUBMANIFOLD GEOMETRY
%%%%%%%%%%%%%%%%%%%%%%%%%%%%%%%%%%%%%%%%%%%%%%%%%%%%%%%%%%%%%%%%%%%%%%%%%%%%%%%%%%
\section{Evolution of submanifold geometry}

%%%%%%%%%%%%%%%%%%%%%%%%%%%%%%%%%%%%%%%%%%%%%%%%%%%%%%%%%%%%%%%%%%%%%%%%%%%%%%%%%%
% PRESERVED QUANTITIES
%%%%%%%%%%%%%%%%%%%%%%%%%%%%%%%%%%%%%%%%%%%%%%%%%%%%%%%%%%%%%%%%%%%%%%%%%%%%%%%%%%
\subsection{Preserved quantities}
\label{sec:Pres}

Consider a smooth map $f:\Rb^2\to\Rb^2$. The property of $f$ being strictly area-decreasing
can be expressed in terms of the singular values $\sv_1,\sv_2$ of the differential $\dd f$ as
\[
	\sv_1^2 \sv_2^2 \le 1-\delta \,. 
\]
for some $\delta\in(0,1]$.
Consequently, by Equation \eqref{eq:sSV}, this can also be rephrased in terms of the tensor $\sind$ as follows.
If $f$ is strictly area-decreasing, there is $\varepsilon>0$, such that
the inequality
\[
	\tr(\sind) = \frac{2(1-\sv_1^2\sv_2^2)}{(1+\sv_1^2)(1+\sv_2^2)} \ge \varepsilon
\]
holds. We will now modify $\tr(\sind)-\varepsilon$ using
the function
\begin{equation}
	\label{eq:PhiDef}
	\phi_R(x) \coloneqq 1 + \frac{\|x\|^2_{\Rb^2}}{R^2} \,,
\end{equation}
where $\|\cdot\|_{\Rb^2}$ is the Euclidean norm on $\Rb^2$ and $R>0$ is a constant
which will be chosen later. 

\begin{lemma}
	\label{lem:phiEst}
	Let $F(x,t)$ be a smooth solution to \eqref{eq:MCF} with bounded geometry and assume
	there is $\varepsilon>0$, such that $\tr(\sind) \ge \varepsilon$ for any $t\in[0,T)$.
	Fix any $T'\in[0,T)$ and $(x_0,t_0)\in\Rb^2\times[0,T']$. Then the following estimates hold,
	\begin{align*}
		- c(T') \frac{\|x_0\|_{\Rb^2}}{R^2} \tr(\sind) &\le \< \nabla \phi_R, \nabla \tr(\sind) \> \le c(T') \frac{\|x_0\|_{\Rb^2}}{R^2} \tr(\sind) \,, \\
		| \Lap \phi_R | &\le c(T') \left( \frac{1}{R^2} + \frac{\|x_0\|_{\Rb^2}}{R^2} \right) \,,
	\end{align*}
	where $c(T')\ge 0$ is a constant depending only on $T'$.
\end{lemma}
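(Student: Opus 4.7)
The plan is to reduce everything to direct calculations in the standard Euclidean coordinates $(x^1, x^2)$ of the domain, and then invoke bounded geometry to control the metric quantities that appear. Since $\phi_R$ depends only on the domain variable, a direct computation gives
\[
	\partial_i \phi_R(x) = \frac{2x^i}{R^2}, \qquad \partial_i \partial_j \phi_R(x) = \frac{2\delta_{ij}}{R^2}.
\]
Evaluated at $x_0$ with respect to the induced metric $\gind$, this yields
\[
	|\nabla \phi_R|^2_{\gind} = \frac{4}{R^4}\, \gind^{ij}(x_0)\, x_0^i x_0^j, \qquad \Lap \phi_R = \gind^{ij}\!\left(\frac{2\delta_{ij}}{R^2} - \frac{2x_0^k}{R^2}\,\Gamma_{ij}^k\right).
\]

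Next, I would bound the metric and connection coefficients uniformly on $\RM\times[0,T']$. The bounded geometry assumption $C_1(t)\RgM \le \gind \le C_2(t)\RgM$, with $C_1, C_2$ continuous on the closed interval $[0,T']$, yields a uniform bound $|\gind^{ij}| \le c(T')$. The Christoffel symbols $\Gamma_{ij}^k$ are combinations of $\gind^{ij}$ and of first derivatives of $\gind_{ij} = \delta_{ij} + \partial_i f_t^a\,\partial_j f_t^a$, so they are controlled by $\D f_t$ and $\D^2 f_t$, both of which are uniformly bounded on $\RM\times[0,T']$ by the bounded geometry of $f_t$. Consequently $|\Gamma_{ij}^k| \le c(T')$, and substituting into the displayed formulas gives $|\nabla\phi_R|_{\gind} \le c(T')\|x_0\|_{\Rb^2}/R^2$ as well as the stated Laplacian bound.

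For the mixed term I would apply Cauchy--Schwarz,
\[
	\bigl|\langle \nabla\phi_R, \nabla\tr(\sind)\rangle\bigr| \le |\nabla\phi_R|_{\gind} \, |\nabla\tr(\sind)|_{\gind},
\]
and observe that $\tr(\sind)$ is a smooth expression in $\D f_t$ and $\gind^{ij}$, so its covariant gradient is uniformly bounded by some $c(T')$ thanks to bounded geometry. Using the standing hypothesis $\tr(\sind) \ge \varepsilon > 0$, the resulting constant may be absorbed into a factor of $\tr(\sind)$, which yields the required two-sided estimate. The only real obstacle is bookkeeping: every bound extracted from bounded geometry must depend on $T'$ alone and not on the chosen $(x_0,t_0)$. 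This reduces to the observation that each bounded-geometry constant is continuous in $t$ and hence attains a maximum on $[0,T']$, while the sup-in-$x$ bounds are uniform by hypothesis.
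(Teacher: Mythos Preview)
Your proposal is correct and follows essentially the same approach as the paper: bound $|\nabla\phi_R|$ and $|\Lap\phi_R|$ by direct coordinate computation using the metric equivalence, bound $|\nabla\tr(\sind)|$ via bounded geometry, apply Cauchy--Schwarz, and absorb the resulting constant using $\tr(\sind)\ge\varepsilon$. The only cosmetic difference is that the paper bounds $\nabla\tr(\sind)$ through the identity $\nabla_u\tr(\sind)=2\sum_k\sMN\bigl(\A(u,e_k),\dd F(e_k)\bigr)$ (so the bound comes directly from $\|\A\|$), and cites \cite{CCH12} for the Laplacian estimate, whereas you compute both in coordinates via $\D f_t$, $\D^2 f_t$ and the Christoffel symbols.
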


\begin{proof}
	Note that
	\[
		\nabla_u \tr(\sind) = \sum_{k=1}^2 (\nabla_u\sind)(e_k,e_k) = 2 \sum_{k=1}^2 \sind_{\Rb^2\times\Rb^2}\bigl( \A(u,e_k), \dd F(e_k) \bigr) \,.
	\]
	The bounded geometry assumptions \eqref{eq:BddGeom1} and \eqref{eq:BddGeom2} imply that $\sind$, $\nabla \sind$
	and therefore $\nabla \tr(\sind)$ are uniformly bounded on $\Rb^2\times[0,T']$ by a constant $c(T')$ depending only on $T'$.
	Thus, also using $\tr(\sind)\ge \varepsilon$, at $(x_0,t_0)$ we have
	\[
		-c(T') \frac{\|x_0\|_{\Rb^2}}{R^2} \tr(\sind) \le \< \nabla \phi_R, \nabla \tr(\sind) \> \le c(T') \frac{\|x_0\|_{\Rb^2}}{R^2} \tr(\sind) \,.
	\]
	
	The statement for $|\Lap\phi_R|$ is given in \cite{CCH12}*{Eq.\ 3.4}.
\end{proof}

Let us define
\[
	\uppsi(x,t) \coloneqq \ee^{\sigma t} \phi_R(x) \tr(\sind)_{|(x,t)} - \varepsilon \,.
\]

\begin{lemma}
	\label{lem:AreaDecrEpsilon}
	Let $F(x,t)$ be a smooth solution to \eqref{eq:MCF} with bounded geometry. Assume there
	is $\varepsilon>0$ with $\tr(\sind)\ge \varepsilon$ at $t=0$, and
	$\tr(\sind)\ge \frac{\varepsilon}{2}$ for all $t\in[0,T)$. Then it is
	$\tr(\sind)\ge \varepsilon$ for all $t\in[0,T)$.
\end{lemma}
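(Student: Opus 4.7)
The strategy is to apply a maximum principle to the auxiliary function $\uppsi$ defined just above the lemma, using the parameters $\sigma>0$ and $R>0$ to tame the non-compactness of $\Rb^2$. Fix $T'\in(0,T)$ arbitrary; it suffices to establish the conclusion on $[0,T']$, since $T'$ is arbitrary. At $t=0$ we have $\uppsi(x,0)=\phi_R(x)\tr(\sind)_{|t=0}-\varepsilon\ge 0$ because $\phi_R\ge 1$ and $\tr(\sind)_{|t=0}\ge\varepsilon$. On the other hand, since $\tr(\sind)\ge\varepsilon/2$ is bounded below and $\phi_R(x)\to\infty$ as $\|x\|_{\Rb^2}\to\infty$, $\uppsi(x,t)\to+\infty$ uniformly in $t\in[0,T']$. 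Consequently, if $\uppsi$ were negative somewhere, its infimum over $\Rb^2\times[0,T']$ would be attained at some point $(x_0,t_0)$ with $t_0>0$.

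At such a point I would use $\dt\uppsi\le 0$, $\nabla\uppsi=0$, and $\Lap\uppsi\ge 0$, so $(\dt-\Lap)\uppsi\le 0$. A direct computation, combined with Lemma \ref{lem:TrEvol} (and discarding the two manifestly non-negative terms $2\|\A\|^2\tr(\sind)$ and the squared expression involving $\A_{ij}^k$), yields
\[
(\dt-\Lap)\uppsi \ge e^{\sigma t_0}\Bigl[\sigma\phi_R\tr(\sind) - \tfrac{1}{2}\phi_R\tfrac{\|\nabla\tr(\sind)\|^2}{\tr(\sind)} - (\Lap\phi_R)\tr(\sind) - 2\<\nabla\phi_R,\nabla\tr(\sind)\>\Bigr].
\]
The critical point condition $\nabla\uppsi=0$ gives $\nabla\tr(\sind)=-\frac{\tr(\sind)}{\phi_R}\nabla\phi_R$, so the gradient term $-\tfrac12\phi_R\|\nabla\tr(\sind)\|^2/\tr(\sind)$ becomes $-\tfrac12\tr(\sind)\|\nabla\phi_R\|^2/\phi_R$, while $-2\<\nabla\phi_R,\nabla\tr(\sind)\>=2\tr(\sind)\|\nabla\phi_R\|^2/\phi_R$; the sum is non-negative. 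Therefore
\[
(\dt-\Lap)\uppsi \;\ge\; e^{\sigma t_0}\tr(\sind)\bigl[\sigma\phi_R-\Lap\phi_R\bigr]\quad\text{at }(x_0,t_0).
\]

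Now I invoke Lemma \ref{lem:phiEst} to bound $|\Lap\phi_R|\le c(T')(1/R^2+\|x_0\|_{\Rb^2}/R^2)$, while $\sigma\phi_R=\sigma+\sigma\|x_0\|_{\Rb^2}^2/R^2$. Completing the square in $\|x_0\|_{\Rb^2}$ gives
\[
\sigma\phi_R-\Lap\phi_R \;\ge\; \sigma-\frac{c(T')}{R^2}-\frac{c(T')^2}{4\sigma R^2}.
\]
For any fixed $\sigma>0$, choosing $R=R(\sigma,T')$ sufficiently large makes this strictly positive. Then $(\dt-\Lap)\uppsi>0$ at $(x_0,t_0)$, contradicting $(\dt-\Lap)\uppsi\le 0$. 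Hence $\uppsi\ge 0$ on $\Rb^2\times[0,T']$, i.e.\ $e^{\sigma t}\phi_R(x)\tr(\sind)\ge\varepsilon$.

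For a fixed point $(x,t)\in\Rb^2\times[0,T']$, sending $R\to\infty$ first (with $\sigma$ fixed) gives $e^{\sigma t}\tr(\sind)\ge\varepsilon$, and then $\sigma\to 0$ yields $\tr(\sind)(x,t)\ge\varepsilon$, as desired. The principal obstacle is setting up $\sigma$ and $R$ so that a genuine contradiction survives after the critical-point cancellation; the parameter $\sigma$ is needed to absorb the unsigned Laplacian term from $\phi_R$, and it must be taken small (with $R$ correspondingly large) at the end so that the bound on $\tr(\sind)$ closes back to $\varepsilon$ rather than $e^{-\sigma T'}\varepsilon$.
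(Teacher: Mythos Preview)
Your proof is correct and follows the same localized maximum-principle strategy as the paper. Your handling of the gradient terms is in fact a little cleaner: by substituting the critical-point relation $\nabla\tr(\sind)=-(\tr(\sind)/\phi_R)\nabla\phi_R$ into \emph{both} gradient terms you find that they sum to $\tfrac{3}{2}\tr(\sind)\|\nabla\phi_R\|^2/\phi_R\ge 0$, whereas the paper only substitutes into the first and then bounds the resulting $\tfrac{1}{2}\<\nabla\phi_R,\nabla\tr(\sind)\>$ separately via Lemma~\ref{lem:phiEst}, picking up an extra $\|x_0\|_{\Rb^2}/R^2$ error term in the final inequality.
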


\begin{proof}
	The proof closely follows the strategy in \cites{CCH12,Lub16}.
	We will show that for any fixed $T'\in[0,T)$ and $\sigma>0$, there is $R_0>0$ depending
	only on $\sigma$ and $T'$, such that $\uppsi>0$ on $\Rb^2\times[0,T']$ for all
	$R\ge R_0$.
	
	On the contrary, suppose $\uppsi$ is not positive on $\Rb^2\times[0,T']$
	for some $R\ge R_0$. Then as $\psi>0$ on $\Rb^2\times\{0\}$, $\tr(\sind)\ge \frac{\varepsilon}{2}$
	on $\Rb^2\times[0,T)$ and $\phi_R(x)\to\infty$ as $\|x\|\to\infty$, it follows
	that $\uppsi>0$ outside some compact set $K\subset\Rb^2$ for all $t\in[0,T)$.
	We conclude that there is $(x_0,t_0)\in K\times[0,T']$ such that 
	$\uppsi(x_0,t_0)=0$ at $(x_0,t_0)$ and that $t_0$ is the first such time.
	According to the second derivative criterion, at the point $(x_0,t_0)$
	we have
	\begin{equation}
		\label{eq:PsiCond}
		\partial_t \uppsi \le 0 \,, \qquad \nabla \uppsi = 0 \qquad \text{and} \qquad \Lap \uppsi \ge 0 \,.
	\end{equation}
	On the other hand, using Lemma \ref{lem:TrEvol}, we estimate the terms in the evolution equation for $\uppsi$,
	as given by
	\begin{align*}
		(\dt - \Lap) \uppsi &= \ee^{\sigma t} \phi_R \bigg\{ 2 \|\A\|^2 \tr(\sind) - \frac{1}{2} \frac{\|\nabla \tr(\sind)\|^2}{\tr(\sind)} \\
				& \qquad \qquad \qquad + \frac{2}{\tr(\sind)} \sum_{k=1}^2 \left( \frac{2\sv_2}{1+\sv_2^2} \A_{1k}^1 + \frac{2\sv_1}{1+\sv_1^2} \A_{2k}^2 \right)^2 \bigg\} \\
			& \qquad - \ee^{\sigma t} \Bigl\{ (\Lap\phi_R) \tr(\sind) + 2 \< \nabla \phi_R, \nabla \tr(\sind) \> - \sigma \phi_R\tr(\sind) \Bigr\} \\
			& \eqqcolon \Ac + \Bc \,,
	\end{align*}
	where we collect all terms coming from the evolution equation of $\tr(\sind)$ (i.\,e.\ the first two lines)
	in $\Ac$ and the remaining terms (i.\,e.\ the third line) in $\Bc$.
	To estimate the terms in $\Ac$ at $(x_0,t_0)$,
	note that the vanishing of the first derivative in \eqref{eq:PsiCond} implies the equality
	\[
		(\nabla \phi_R)\tr(\sind) = - \phi_R \nabla \tr(\sind) \,.
	\]
	Consequently, since $\tr(\sind)\ge\frac{\varepsilon}{2}$ by assumption, at $(x_0,t_0)$ we derive the estimate
	\begin{align*}
		\Ac &\stackrel{\hphantom{\text{Lem.\ \ref{lem:phiEst}}}}{=} \ee^{\sigma t_0} \phi_R \bigg\{ \underbrace{2 \|\A\|^2 \tr(\sind)}_{\ge \|\A\|^2\varepsilon \ge 0} - \frac{1}{2} \frac{\|\nabla \tr(\sind)\|^2}{\tr(\sind)} \\
				& \qquad \qquad \qquad \qquad \qquad \qquad + \underbrace{\frac{2}{\tr(\sind)} \sum_{k=1}^2 \left( \frac{2\sv_2}{1+\sv_2^2} \A_{1k}^1 + \frac{2\sv_1}{1+\sv_1^2} \A_{2k}^2 \right)^2}_{\ge 0} \bigg\} \\
			& \stackrel{\hphantom{\text{Lem.\ \ref{lem:phiEst}}}}{\ge} - \frac{\ee^{\sigma t_0} \phi_R}{2} \frac{\|\nabla\tr(\sind)\|^2}{\tr(\sind)} = \frac{\ee^{\sigma t_0}}{2} \< \nabla \phi_R, \nabla \tr(\sind) \> \\
			& \stackrel{\text{Lem.\ \ref{lem:phiEst}}}{\ge} - \frac{\ee^{\sigma t_0}}{2} \frac{\|x_0\|_{\Rb^2}}{R^2} c(T') \tr(\sind) \,.
	\end{align*}
	Lemma \ref{lem:phiEst} and further evaluation yields
	\begin{align*}
		\Ac + \Bc &\ge - \frac{\ee^{\sigma t_0}}{2} \frac{\|x_0\|_{\Rb^2}}{R^2} c(T') \tr(\sind) \\
			& \quad - \ee^{\sigma t_0}\left\{ c(T') \left( \frac{1}{R^2} + \frac{\|x_0\|_{\Rb^2}}{R^2} \right) + 2 c(T') \frac{\|x_0\|_{\Rb^2}}{R^2} \right. \\
				& \qquad \qquad \qquad \qquad \qquad\qquad\qquad  \left.{} - \sigma - \sigma \frac{\|x_0\|^2_{\Rb^2}}{R^2} \right\} \tr(\sind) \\
			&= \ee^{\sigma t_0}\left\{ \sigma + \sigma \frac{\|x_0\|^2_{\Rb^2}}{R^2} - \frac{7}{2} c(T') \frac{\|x_0\|_{\Rb^2}}{R^2} - \frac{c(T')}{R^2} \right\} \tr(\sind) \,.
	\end{align*}
	Note that by choosing $R_0>0$ (depending on $\sigma$ and $T'$) large enough, the term
	\[
		\frac{\sigma}{2} + \sigma \frac{\|x_0\|^2_{\Rb^2}}{R^2} - \frac{7}{2} c(T') \frac{\|x_0\|_{\Rb^2}}{R^2} - \frac{c(T')}{R^2}
	\]
	is strictly positive for any $R\ge R_0$ and any $\|x_0\|_{\Rb^2}$. Continuing with the above
	calculation, we obtain
	\[
		\left( \partial_t - \Lap \right) \uppsi_{|(x_0,t_0)} \ge \ee^{\sigma t_0} \frac{\sigma}{2} \tr(\sind) \ge \ee^{\sigma t_0} \frac{\sigma}{2} \frac{\varepsilon}{2} > 0 \,.
	\]
	But this
	is a contradiction to \eqref{eq:PsiCond}, which shows the claim.
	
	The statement of the Lemma follows by first letting $R\to\infty$, then $\sigma\to 0$ and finally
	$T'\to T$.
\end{proof}

\begin{lemma}
	\label{lem:AreaDecrPres}
	Let $F(x,t)$ be a smooth solution to \eqref{eq:MCF} for $t\in[0,T)$ with bounded geometry.
	If there is $\varepsilon>0$ with $\tr(\sind)\ge\varepsilon$ at $t=0$, then
	$\tr(\sind)\ge\varepsilon$ for all $t\in[0,T)$.
\end{lemma}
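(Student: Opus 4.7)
My plan is to upgrade Lemma \ref{lem:AreaDecrEpsilon} by a standard continuity/openness argument. Define
\[
    T^{\ast} \coloneqq \sup\Bigl\{ t' \in [0,T) : \tr(\sind)(x,s) \ge \tfrac{\varepsilon}{2} \text{ for all } (x,s) \in \Rb^2 \times [0,t'] \Bigr\}.
\]
Since $\tr(\sind) \ge \varepsilon > \varepsilon/2$ at $t=0$, the bounded geometry assumptions \eqref{eq:BddGeom1}, \eqref{eq:BddGeom2} give, for any $T' \in [0,T)$, a uniform bound on $|\dt \tr(\sind)|$ on $\Rb^2 \times [0,T']$ (one reads this off the evolution equation in Lemma \ref{lem:TrEvol}, whose right-hand side is a contraction of uniformly bounded quantities). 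This uniform time-derivative bound implies that $\tr(\sind)$ cannot instantaneously drop from $\varepsilon$ to below $\varepsilon/2$ uniformly on $\Rb^2$, so $T^{\ast} > 0$.

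On $[0,T^{\ast})$, the hypothesis of Lemma \ref{lem:AreaDecrEpsilon} is satisfied, hence $\tr(\sind) \ge \varepsilon$ there. If $T^{\ast} < T$, continuity of $\tr(\sind)$ in $t$ (together with the uniform time-derivative bound) extends the bound $\tr(\sind) \ge \varepsilon/2$ to a slightly larger interval $[0, T^{\ast} + \eta]$ for some $\eta > 0$, contradicting the definition of $T^{\ast}$. Therefore $T^{\ast} = T$, and applying Lemma \ref{lem:AreaDecrEpsilon} once more on all of $[0,T)$ gives $\tr(\sind) \ge \varepsilon$ throughout.

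The only genuinely delicate point is ensuring that $T^{\ast}>0$ and that the open-extension step is valid: both require uniformity in $x \in \Rb^2$, which is what would fail on a general non-compact manifold but is guaranteed here by the bounded geometry hypothesis. Everything else is a cosmetic bootstrap off of Lemma \ref{lem:AreaDecrEpsilon}, in which the weighted test function $\uppsi$ and the choice $R \to \infty$, $\sigma \to 0$, $T' \to T$ have already done all the real analytic work.
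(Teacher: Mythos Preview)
Your argument is correct and is essentially the same continuity bootstrap as the paper's own proof: define the maximal time on which $\tr(\sind)\ge\varepsilon/2$, apply Lemma~\ref{lem:AreaDecrEpsilon} there to upgrade to $\tr(\sind)\ge\varepsilon$, then use the uniform bound on $\partial_t\tr(\sind)$ furnished by bounded geometry to extend past that time, contradicting maximality unless it equals $T$. One minor quibble: the form of the evolution equation in Lemma~\ref{lem:TrEvol} carries $1/\tr(\sind)$ factors, so its right-hand side is not literally ``a contraction of uniformly bounded quantities''; for a bound on $\partial_t\tr(\sind)$ that does not already presuppose positivity of $\tr(\sind)$, it is cleaner to invoke Corollary~\ref{cor:TrEvol} (as the paper effectively does), whose right-hand side is manifestly controlled by $\|\A\|$ and the singular values alone.
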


\begin{proof}
	By Lemma \ref{lem:AreaDecrEpsilon}, we only need to remove the assumption $\tr(\sind)\ge\frac{\varepsilon}{2}$
	in $[0,T)$. By the bounded geometry assumption on $F(x,t)$, the right
	hand side of the evolution equation of $\tr(\sind)$ is bounded, so that
	\[
		\|\partial_t \tr(\sind)\| \le C(t) \,,
	\]
	where $C(t)$ is a constant only depending on $t$. Since $\tr(\sind)\ge \varepsilon$ at
	$t=0$, it follows that there is a maximal time $T_0>0$, such that $\tr(\sind)>\frac{\varepsilon}{2}$
	holds in $[0,T_0)$. From Lemma \ref{lem:AreaDecrEpsilon} we know that $\tr(\sind)\ge\varepsilon$
	on $\Rb^2\times[0,T_0)$. If $T_0\ne T$, by continuity, we also know that $\tr(\sind)\ge \varepsilon$
	on $\Rb^2\times[0,T_0]$. By the same argument for finding $T_0$ above, we can find
	some positive $T_0'$, such that $\tr(\sind)\ge \frac{\varepsilon}{2}$ in $\Rb^2\times[T_0,T_0+T_0')$,
	where $[T_0,T_0+T_0')\subset [T_0,T)$. But this contradicts the choice of $T_0$, so that $T_0=T$.
\end{proof}

\begin{lemma}
	\label{lem:GraphPres}
	Let $f:\Rb^2\to\Rb^2$ be a smooth strictly area-decreasing map and evolve it by
	the mean curvature flow. Then each $F_t(\Rb^2)$ is the graph of a strictly area-decreasing
	map for $t\in[0,T)$.
\end{lemma}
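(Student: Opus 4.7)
The plan is to reduce the statement to Lemma \ref{lem:AreaDecrPres}. By Theorem \ref{thm:ShortTimeEx} applied to the non-parametric flow \eqref{eq:pMCF}, the initial map $f=f_0$ extends to a smooth family $f_t$ on a time interval $[0,T)$ with $\sup_{x\in\Rb^2}\|\D^l f_t(x)\|<\infty$ for every $l\ge 1$ and $t\in[0,T)$. As explained after \eqref{eq:DiffeoRel}, this $f_t$ yields a smooth solution $F_t$ of \eqref{eq:MCF} (up to tangential diffeomorphism) with $F_t(\Rb^2)=\Gamma(f_t)$, so the graphical property is automatic, and it only remains to verify that $f_t$ inherits the strict area-decreasing property from $f_0$.

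First, I would translate the area-decreasing hypothesis into a condition on the tensor $\sind$. By Equation \eqref{eq:sSV},
\[
	\tr(\sind) = \frac{2(1-\sv_1^2\sv_2^2)}{(1+\sv_1^2)(1+\sv_2^2)} \,,
\]
so strict area-decrease of $f_0$, i.e.\ $\sv_1^2\sv_2^2 \le 1-\delta$ at $t=0$, is equivalent to $\tr(\sind)\ge\varepsilon$ at $t=0$ for some $\varepsilon=\varepsilon(\delta)>0$. Conversely, a uniform lower bound $\tr(\sind)\ge\varepsilon>0$ at time $t$ rearranges to give $\sv_1(t)^2+\sv_2(t)^2+(1+2/\varepsilon)\sv_1(t)^2\sv_2(t)^2\le 2/\varepsilon-1$, which in particular forces $\sv_1(t)^2\sv_2(t)^2\le 1-\delta'(\varepsilon)$ for some $\delta'(\varepsilon)>0$; hence $f_t$ is strictly area-decreasing.

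Next, I would verify that the solution $F_t$ fulfils the bounded-geometry hypothesis of Lemma \ref{lem:AreaDecrPres}. The bounds on $\D^l f_t$ from Theorem \ref{thm:ShortTimeEx} imply that $\gind=\gM+f_t^{*}\gN$ is comparable to $\gM$ on each time slice (giving \eqref{eq:BddGeom2}) and that the iterated covariant derivatives $\nabla^k\A$ are bounded (giving \eqref{eq:BddGeom1}), since $\nabla^k\A$ can be expressed polynomially in the Euclidean derivatives $\D^{k+2}f_t$ and the inverse metric components $\tgind^{ij}$. With this verification in hand, Lemma \ref{lem:AreaDecrPres} yields $\tr(\sind)\ge\varepsilon$ throughout $[0,T)$, and the equivalence of the previous paragraph completes the proof.

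The main obstacle I anticipate is essentially bookkeeping rather than analysis: one has to unpack the bounded-geometry definition and carefully relate the Euclidean derivatives $\D^l f_t$ provided by Theorem \ref{thm:ShortTimeEx} to the intrinsic quantities $\nabla^k\A$ and the induced metric $\gind$. All the substantive analytic input—the maximum-principle argument confronting the non-compactness of $\Rb^2$—has already been carried out in Lemma \ref{lem:AreaDecrPres}, so no new estimate is needed here.
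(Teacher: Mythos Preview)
Your proposal is correct and essentially unpacks what the paper defers to \cite{SHS14}*{Proof of Proposition 3.3}: translate strict area-decrease into a uniform lower bound $\tr(\sind)\ge\varepsilon$, verify bounded geometry from the derivative bounds on $f_t$, invoke Lemma~\ref{lem:AreaDecrPres}, and translate back. One small correction: the forward implication from $\sv_1^2\sv_2^2\le 1-\delta$ to $\tr(\sind)\ge\varepsilon$ also requires a bound on the individual singular values to control the denominator $(1+\sv_1^2)(1+\sv_2^2)$, so $\varepsilon$ depends on both $\delta$ and $\sup\|\D f_0\|$ rather than on $\delta$ alone---this bound is available in the bounded-geometry setting you invoke, but it is not a consequence of the area condition by itself.
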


\begin{proof}
	The proof is the same as \cite{SHS14}*{Proof of Proposition 3.3}.
\end{proof}

%%%%%%%%%%%%%%%%%%%%%%%%%%%%%%%%%%%%%%%%%%%%%%%%%%%%%%%%%%%%%%%%%%%%%%%%%%%%%%%%%%
% A PRIORI ESTIMATES
%%%%%%%%%%%%%%%%%%%%%%%%%%%%%%%%%%%%%%%%%%%%%%%%%%%%%%%%%%%%%%%%%%%%%%%%%%%%%%%%%%
\subsection{A priori estimates}
\label{sec:APriori}

To obtain estimates for the mean curvature vector, let us define the function
\[
	\upchi(x,t) \coloneqq \ee^{\sigma t}\phi_R(x) \tr(\sind)_{|(x,t)} - \varepsilon_2 \bigl( t \|\Hv\|^2_{|(x,t)} + 1 \bigr) \,.
\]

\begin{lemma}
	\label{lem:chiEvol}
	The evolution equation for $\upchi$ under the mean curvature flow is given by
	\begin{align*}
		\bigl( \partial_t - \Lap \bigr) \upchi &= \ee^{\sigma t}\phi_R \left\{ 2 \|\A\|^2\tr(\sind) + \frac{2}{\tr(\sind)} \sum_{k=1}^2 \left( \frac{2\sv_2}{1+\sv_2^2} \A_{1k}^1 + \frac{2\sv_1}{1+\sv_1^2} \A_{2k}^2 \right)^2 \right\} \\
			& \quad - \frac{1}{2} \ee^{\sigma t} \phi_R \frac{\|\nabla \tr(\sind)\|^2}{\tr(\sind)} \\
			& \quad - \varepsilon_2 t \left\{ - 2 \|\nN \Hv\|^2 + 2 \sum_{i,j=1}^2 \A_{\Hv}^2(e_i,e_j) \right\} - \varepsilon_2 \|\Hv\|^2 \\
			& \quad + \ee^{\sigma t} \Bigl\{ \sigma \phi_R \tr(\sind) - (\Lap \phi_R)\tr(\sind) - 2 \<\nabla \phi_R, \nabla \tr(\sind)\> \Bigr\} \,.
	\end{align*}
\end{lemma}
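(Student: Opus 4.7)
The statement is essentially a bookkeeping computation. My plan is to split $\upchi$ into its three constituent pieces, apply the heat operator $(\partial_t - \Lap)$ to each term using linearity and the Leibniz rule, and then substitute the evolution equations that are already available.

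First I would treat the piece $\ee^{\sigma t}\phi_R\tr(\sind)$. Since $\phi_R$ is time-independent, the time derivative contributes $\sigma\ee^{\sigma t}\phi_R\tr(\sind) + \ee^{\sigma t}\phi_R\,\partial_t \tr(\sind)$, while the spatial Laplacian, by the standard product rule
\[
	\Lap(\phi_R\tr(\sind)) = (\Lap\phi_R)\tr(\sind) + 2\<\nabla\phi_R,\nabla\tr(\sind)\> + \phi_R\,\Lap\tr(\sind),
\]
rearranges into $\ee^{\sigma t}\phi_R(\partial_t-\Lap)\tr(\sind)$ plus the three boundary-type terms
$\sigma\phi_R\tr(\sind) - (\Lap\phi_R)\tr(\sind) - 2\<\nabla\phi_R,\nabla\tr(\sind)\>$ (all multiplied by $\ee^{\sigma t}$), which is exactly the last line of the claimed identity. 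The remaining $\ee^{\sigma t}\phi_R(\partial_t-\Lap)\tr(\sind)$ is then expanded using Lemma \ref{lem:TrEvol}, yielding the first two lines of the claimed right-hand side.

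Next I would handle the term $-\varepsilon_2 t\|\Hv\|^2$. The time derivative contributes $-\varepsilon_2\|\Hv\|^2 - \varepsilon_2 t\,\partial_t\|\Hv\|^2$, and applying $-\Lap$ gives $+\varepsilon_2 t\,\Lap\|\Hv\|^2$, so these combine to $-\varepsilon_2\|\Hv\|^2 - \varepsilon_2 t(\partial_t-\Lap)\|\Hv\|^2$. To finish, I would invoke the standard evolution equation for the squared norm of the mean curvature vector along mean curvature flow in an ambient Euclidean space, namely
\[
	(\partial_t - \Lap)\|\Hv\|^2 = -2\|\nN\Hv\|^2 + 2\sum_{i,j=1}^2 \A_{\Hv}^2(e_i,e_j),
\]
which is the analogue of Huisken's identity $(\partial_t-\Lap)|H|^2 = -2|\nabla^\perp H|^2 + 2|A\cdot H|^2$. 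Substituting reproduces the third line. The constant term $-\varepsilon_2$ is annihilated by $\partial_t - \Lap$.

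The computation is entirely mechanical once these two ingredients (Lemma \ref{lem:TrEvol} and the $\|\Hv\|^2$-identity) are in hand; there is no genuine obstacle. If anything, the only subtlety is making sure the connection $\nN$ used in $\|\nN\Hv\|^2$ is the normal connection of the graph and that the $\A_{\Hv}$ terms are evaluated with respect to the same orthonormal frame $\{e_1,e_2\}$ diagonalizing $f^*\gN$ from Section \ref{sec:SVD}, so that the expressions in the first line (which explicitly reference the singular values $\sv_1,\sv_2$) are consistent with those in the third line.
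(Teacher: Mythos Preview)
Your proposal is correct and mirrors the paper's own proof exactly: the paper also splits $\upchi$ via the Leibniz rule, isolates $\ee^{\sigma t}\phi_R(\partial_t-\Lap)\tr(\sind)$ and $-\varepsilon_2 t(\partial_t-\Lap)\|\Hv\|^2$, and then substitutes Lemma~\ref{lem:TrEvol} together with the standard evolution equation $(\partial_t-\Lap)\|\Hv\|^2 = -2\|\nN\Hv\|^2 + 2\sum_{i,j}\A_{\Hv}^2(e_i,e_j)$ (cited from \cite{Smo12}*{Corollary 3.8}). Your closing remark about the frame is harmless but unnecessary, since $\sum_{i,j}\A_{\Hv}^2(e_i,e_j)$ is frame-independent.
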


\begin{proof}
	We calculate
	\begin{align*}
		\bigl( \partial_t - \Lap \bigr) \upchi &= \ee^{\sigma t}\phi_R\bigl( \partial_t - \Lap \bigr) \tr(\sind) - \varepsilon_2 t \bigl( \partial_t - \Lap \bigr) \|\Hv\|^2 \\
			&\quad - \varepsilon_2 \|\Hv\|^2 \\
			&\quad + \ee^{\sigma t} \Bigl\{ \sigma \phi_R \tr(\sind) - (\Lap\phi_R)\tr(\sind) - 2 \<\nabla\phi_R,\nabla \tr(\sind)\> \Bigr\} \,.
	\end{align*}
	Now, recall (see e.\,g.\ \cite{Smo12}*{Corollary 3.8}) that the square norm of the mean curvature vector evolves by
	\begin{align*}
		\bigl( \partial_t - \Lap \bigr) \|\Hv\|^2 &= -2 \|\nN \Hv\|^2 + 2 \sum_{i,j=1}^2 \A_{\Hv}^2(e_i,e_j) \,,
	\end{align*}
	which together with Lemma \ref{lem:TrEvol} implies the claim.
\end{proof}

\begin{lemma}
	\label{lem:HDecay}
	Let $F(x,t)$ be a smooth, graphic solution to \eqref{eq:MCF} with bounded geometry and
	suppose $\tr(\sind)\ge \varepsilon_1$ on $[0,T)$ for some $\varepsilon_1>0$. Then there
	is a constant $C\ge 0$ depending only on $\varepsilon_1$, such that
	\[
		t \|\Hv\|^2 \le C
	\]
	on $\Rb^2\times[0,T)$.
\end{lemma}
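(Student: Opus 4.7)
The strategy is to apply the non-compact maximum principle of Lemma \ref{lem:AreaDecrEpsilon} to the auxiliary function $\upchi$. By Lemma \ref{lem:AreaDecrPres}, $\tr(\sind)\ge \varepsilon_1$ on $\Rb^2\times[0,T)$. I fix $\varepsilon_2\in(0,\varepsilon_1)$, $T'\in[0,T)$, $\sigma>0$, and aim to show that for $R$ large enough (depending on $T'$, $\sigma$, $\varepsilon_1$, $\varepsilon_2$) one has $\upchi>0$ on $\Rb^2\times[0,T']$. At $t=0$, $\upchi\ge \varepsilon_1-\varepsilon_2>0$, and by the bounded geometry hypothesis $\|\Hv\|$ is uniformly bounded on $[0,T']$, so $\upchi>0$ outside some compact set $K\subset\Rb^2$. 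If the claim fails, there is a first zero $(x_0,t_0)\in K\times(0,T']$ at which $(\partial_t-\Lap)\upchi\le 0$ and $\nabla\upchi=0$.

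The cornerstone of the argument is the identity $\ee^{\sigma t_0}\phi_R\tr(\sind)-\varepsilon_2 t_0\|\Hv\|^2=\varepsilon_2$, which follows from $\upchi(x_0,t_0)=0$. Combined with the universal bound $\sum_{i,j}\A_{\Hv}^2(e_i,e_j)\le\|\A\|^2\|\Hv\|^2$ and the two-dimensional inequality $\|\Hv\|^2\le 2\|\A\|^2$, the reaction terms in Lemma \ref{lem:chiEvol} collapse to
\[
	2\ee^{\sigma t_0}\phi_R\|\A\|^2\tr(\sind)-2\varepsilon_2 t_0\sum_{i,j=1}^2\A_{\Hv}^2(e_i,e_j)-\varepsilon_2\|\Hv\|^2\ge 2\varepsilon_2\|\A\|^2-\varepsilon_2\|\Hv\|^2\ge 0.
\]
For the bad term $-\tfrac12\ee^{\sigma t_0}\phi_R\|\nabla\tr(\sind)\|^2/\tr(\sind)$, I would use $\nabla\upchi=0$ to write $\ee^{\sigma t_0}\phi_R\nabla\tr(\sind)=\varepsilon_2 t_0\nabla\|\Hv\|^2-\ee^{\sigma t_0}\tr(\sind)\nabla\phi_R$. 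Young's inequality, together with $\|\nabla\|\Hv\|^2\|^2\le 4\|\nN\Hv\|^2\|\Hv\|^2$ and the pointwise estimate $\varepsilon_2 t_0\|\Hv\|^2\le\ee^{\sigma t_0}\phi_R\tr(\sind)$, converts this into a bound of the form $-c\,\varepsilon_2 t_0\|\nN\Hv\|^2$ plus boundary contributions of order $\|\nabla\phi_R\|^2/R^2$ that vanish as $R\to\infty$. The leftover $\|\nN\Hv\|^2$ contribution is to be absorbed by the good $2\varepsilon_2 t_0\|\nN\Hv\|^2$ term with the help of the nonnegative cross term $\tfrac{2\ee^{\sigma t_0}\phi_R}{\tr(\sind)}\sum_k(\cdots)^2$ from Lemma \ref{lem:TrEvol}, while $\sigma\ee^{\sigma t_0}\phi_R\tr(\sind)\ge\sigma\varepsilon_2$ provides strict positivity uniform in $R$.

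The resulting strict inequality $(\partial_t-\Lap)\upchi>0$ at $(x_0,t_0)$ contradicts the second-derivative criterion, so $\upchi>0$ throughout $\Rb^2\times[0,T']$. Letting $R\to\infty$, then $\sigma\to 0$, and finally $T'\to T$ yields $\tr(\sind)\ge\varepsilon_2(t\|\Hv\|^2+1)$ on $\Rb^2\times[0,T)$, whence $t\|\Hv\|^2\le(2-\varepsilon_2)/\varepsilon_2$; choosing $\varepsilon_2:=\varepsilon_1/2$ gives a constant depending only on $\varepsilon_1$, as required.

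The main obstacle will be the clean absorption of the residual $\|\nN\Hv\|^2$ contribution arising from the gradient term: the naive Young estimate just fails to be dominated by $2\varepsilon_2 t_0\|\nN\Hv\|^2$ alone, and the extra positive quantity appearing in Lemma \ref{lem:TrEvol} (encoding a Kato-type inequality specific to the two-dimensional area-decreasing geometry) is precisely what enables this gap to be closed.
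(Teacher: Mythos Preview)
Your overall architecture matches the paper's: same auxiliary function $\upchi$, same contradiction setup, same handling of the reaction terms via $\|\Hv\|^2\le 2\|\A\|^2$ to get $\Ac\ge 0$. The divergence is entirely in the treatment of the gradient term $\mathcal{G}=-\tfrac12\ee^{\sigma t_0}\phi_R\|\nabla\tr(\sind)\|^2/\tr(\sind)+2\varepsilon_2 t_0\|\nN\Hv\|^2$, and here there is a genuine gap.

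Your diagnosis is correct that a naive Young inequality applied to $\ee^{\sigma t_0}\phi_R\nabla\tr(\sind)=\varepsilon_2 t_0\nabla\|\Hv\|^2-\ee^{\sigma t_0}\tr(\sind)\nabla\phi_R$ leaves a residual $-c\eta\,\varepsilon_2 t_0\|\nN\Hv\|^2$ with $\eta>0$ that the good term cannot absorb. But your proposed remedy---invoking the nonnegative quantity $\tfrac{2}{\tr(\sind)}\sum_k\bigl(\tfrac{2\sv_2}{1+\sv_2^2}\A_{1k}^1+\tfrac{2\sv_1}{1+\sv_1^2}\A_{2k}^2\bigr)^2$ from Lemma~\ref{lem:TrEvol}---does not work. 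That term is built from components of the second fundamental form $\A$, not from $\nN\Hv$; it carries no $\|\nN\Hv\|^2$ content and cannot close a deficit of that type. In the paper's proof this extra term is simply discarded as nonnegative and plays no role whatsoever in the gradient estimate.

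The actual fix is simpler than what you anticipate: do not use Young at all. Take the \emph{full} gradient identity $\ee^{\sigma t_0}\bigl((\nabla\phi_R)\tr(\sind)+\phi_R\nabla\tr(\sind)\bigr)=\varepsilon_2 t_0\nabla\|\Hv\|^2$, square both sides exactly, and keep the cross term $2\phi_R\tr(\sind)\langle\nabla\phi_R,\nabla\tr(\sind)\rangle$ rather than destroying it. After using $\bigl\|\nabla\|\Hv\|^2\bigr\|^2\le 4\|\nN\Hv\|^2\|\Hv\|^2$ and $\varepsilon_2 t_0\|\Hv\|^2\le\ee^{\sigma t_0}\phi_R\tr(\sind)$ (from $\upchi=0$), then dividing by $\phi_R$ and dropping the nonnegative $\|\nabla\phi_R\|^2$ contribution, you obtain
\[
\tfrac12\ee^{\sigma t_0}\phi_R\,\frac{\|\nabla\tr(\sind)\|^2}{\tr(\sind)}\le 2\varepsilon_2 t_0\|\nN\Hv\|^2-\ee^{\sigma t_0}\langle\nabla\phi_R,\nabla\tr(\sind)\rangle,
\]
so that $\mathcal{G}\ge \ee^{\sigma t_0}\langle\nabla\phi_R,\nabla\tr(\sind)\rangle$, a pure boundary term handled by Lemma~\ref{lem:phiEst}. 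The coefficient matches \emph{exactly}; there is no gap to close and no Kato-type inequality is needed.
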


\begin{proof}
	Fix $0<\varepsilon_2 < \varepsilon_1$, so that $\upchi$ is positive 
	on $\Rb^2\times\{0\}$. Further, fix any $T'\in[0,T)$. We will first show that we can choose $R_0>0$, such that $\upchi\ge 0$ on $\Rb^2\times[0,T']$
	for all $R\ge R_0$.
	
	Suppose $\upchi$ is not positive on $\Rb^2\times[0,T']$ for some $R\ge R_0$. Then, as $\upchi>0$ on
	$\Rb^2\times\{0\}$, $\tr(\sind)\ge\varepsilon_1$ on $[0,T)$, $\phi_R(x)\to\infty$ as $\|x\|\to\infty$ and
	by the bounded geometry condition \eqref{eq:BddGeom1}, it follows that $\upchi>0$ outside some compact set
	$K\subset\Rb^2$ for all $t\in[0,T']$. We conclude that there is $(x_0,t_0)\in K\times[0,T']$, such that
	$\upchi(x_0,t_0)=0$ and that $t_0$ is the first such time. By the second derivative criterion, at $(x_0,t_0)$
	we have
	\begin{equation}
		\label{eq:ChiSDC}
		\upchi = 0 \,, \quad \nabla \upchi = 0 \,, \quad \partial_t \upchi \le 0 \quad \text{and} \quad \Lap \upchi \ge 0 \,.
	\end{equation}
	On the other hand, we estimate the terms in the evolution equation for $\upchi$ from Lemma \ref{lem:chiEvol}
	at $(x_0,t_0)$.
	Using
	\[
		\sum_{i,j=1}^2 \A^2_{\Hv}(e_i,e_j) \le \|\A\|^2 \|\Hv\|^2
	\]
	and $\phi_R\ge 1$ yields the estimate
	\begin{align*}
		\bigl( \partial_t - \Lap \bigr) \upchi &\ge \ee^{\sigma t_0}\phi_R \Biggl\{ 2 \|\A\|^2 \tr(\sind) + \underbrace{\frac{2}{\tr(\sind)} \sum_{k=1}^2 \left( \frac{2\sv_2}{1+\sv_2^2} \A_{1k}^1 + \frac{2\sv_1}{1+\sv_1^2} \A_{2k}^2 \right)^2}_{\ge 0} \Biggr\} \\
			& \quad - \frac{1}{2} \ee^{\sigma t_0} \phi_R \frac{\|\nabla \tr(\sind)\|^2}{\tr(\sind)} \\
			& \quad + 2 \varepsilon_2 t_0 \bigl\| \nN \Hv\bigr\|^2 - 2 \varepsilon_2 t_0 \|\A\|^2 \|\Hv\|^2 - \varepsilon_2 \|\Hv\|^2 \\
			& \quad + \ee^{\sigma t_0}\Bigl\{ \sigma \phi_R \tr(\sind) - (\Lap \phi_R)\tr(\sind) - 2 \<\nabla \phi_R, \nabla\tr(\sind)\> \Bigr\} \\
		& \ge 2 \|\A\|^2 \left( \ee^{\sigma t_0} \phi_R \tr(\sind) - \varepsilon_2 \bigl( t_0 \|\Hv\|^2 + 1 \bigr) \right) + 2 \varepsilon_2 \|\A\|^2 - \varepsilon_2 \|\Hv\|^2 \\
			& \quad - \frac{1}{2} \ee^{\sigma t_0}\phi_R \frac{\|\nabla \tr(\sind)\|^2}{\tr(\sind)} + 2 \varepsilon_2 t_0 \|\nN\Hv\|^2 \\
			& \quad + \ee^{\sigma t_0}\Bigl\{ \sigma \phi_R \tr(\sind) - (\Lap \phi_R)\tr(\sind) - 2 \<\nabla \phi_R, \nabla\tr(\sind)\> \Bigr\} \\
		& \eqqcolon 2 \|\A\|^2 \upchi + \Ac + \mathcal{G} \\
			& \quad + \ee^{\sigma t_0}\Bigl\{ \sigma \phi_R \tr(\sind) - (\Lap \phi_R)\tr(\sind) - 2 \<\nabla \phi_R, \nabla\tr(\sind)\> \Bigr\} \,,
	\end{align*}
	where
	\begin{align*}
		\Ac &\coloneqq 2 \varepsilon_2 \|\A\|^2 - \varepsilon_2 \|\Hv\|^2 \,, \\
		\mathcal{G} &\coloneqq - \frac{1}{2} \ee^{\sigma t_0}\phi_R \frac{\|\nabla \tr(\sind)\|^2}{\tr(\sind)} + 2 \varepsilon_2 t_0 \|\nN\Hv\|^2 \,.
	\end{align*}
	Since $\|\Hv\|^2\le 2 \|\A\|^2$, we derive 
	\begin{equation}
		\label{eq:AEst}
		\Ac\ge 0 \,.
	\end{equation}
	To estimate the terms in $\mathcal{G}$, we want to exploit
	$\nabla\upchi=0$ at $(x_0,t_0)$. This yields
	\[
		\ee^{\sigma t_0} \bigl\{ (\nabla\phi_R) \tr(\sind) + \phi_R (\nabla\tr(\sind)) \bigr\} = \varepsilon_2 t_0 \nabla \| \Hv\|^2
	\]
	and consequently
	\begin{align*}
		\ee^{2\sigma t_0} \bigl\| (\nabla \phi_R)\tr(\sind) + \phi_R(\nabla \tr(\sind)) \bigr\|^2 &= \varepsilon_2^2 t_0^2 \bigl\| \nabla \|\Hv\|^2 \bigr\|^2 \\
			& \le 4 \varepsilon_2^2 t_0^2 \|\nN\Hv\|^2 \|\Hv\|^2 \\
			& \le 4 \varepsilon_2^2 t_0^2 \|\nN\Hv\|^2 \bigl( \|\Hv\|^2+1 \bigr) \,.
	\end{align*}
	From $\upchi(x_0,t_0)=0$ we get $ \ee^{\sigma t_0} \phi_R \tr(\sind) = \varepsilon_2 t_0 \bigl( \|\Hv\|^2 + 1 \bigr)$, 
	so that
	\[
		\ee^{\sigma t_0} \bigl\| (\nabla \phi_R)\tr(\sind) + \phi_R(\nabla \tr(\sind)) \bigr\|^2 \le 4 \varepsilon_2 t_0 \phi_R \|\nN\Hv\|^2 \tr(\sind) \,.
	\]
	Noting $\phi_R\ge 1$ and sorting the expression, we obtain
	\begin{align*}
		\ee^{\sigma t_0} \phi_R \|\nabla\tr(\sind)\|^2 &\le 4 \varepsilon_2 t_0 \|\nN\Hv\|^2 \tr(\sind) \\
			& \quad - \ee^{\sigma t_0} \left\{ \frac{\|\nabla\phi_R\|^2}{\phi_R} \bigl(\tr(\sind)\bigr)^2 + 2 \tr(\sind) \< \nabla \phi_R, \nabla \tr(\sind) \> \right\} \\
			& \le 4 \varepsilon_2 t_0 \|\nN\Hv\|^2 \tr(\sind) - 2 \ee^{\sigma t_0} \tr(\sind) \<\nabla \phi_R, \nabla\tr(\sind)\> \,.
	\end{align*}
	Thus, the gradient terms satisfy
	\begin{equation}
		\label{eq:GEst}
		\mathcal{G} = - \frac{1}{2} \ee^{\sigma t_0} \phi_R \frac{\|\nabla\tr(\sind)\|^2}{\tr(\sind)} + 2 \varepsilon_2 t_0 \|\nN \Hv\|^2 \ge \ee^{\sigma t_0} \< \nabla \phi_R, \nabla \tr(\sind) \> \,.
	\end{equation}
	Collecting the previous calculations and using $\tr(\sind)\ge\varepsilon_1>0$ as well as $\upchi(x_0,t_0)=0$,
	we estimate the evolution equation of $\upchi$ at $(x_0,t_0)$ by
	\begin{align*}
		(\dt - \Lap) \upchi &\stackrel{\text{Eqs.\ \eqref{eq:AEst}, \eqref{eq:GEst}}}{\ge} \ee^{\sigma t_0} \Bigl\{ \sigma \phi_R \tr(\sind) - (\Lap \phi_R)\tr(\sind) - \< \nabla \phi_R, \nabla \tr(\sind)\> \Bigr\} \\
			& \stackrel{\substack{\hphantom{\text{Eqs.\ \eqref{eq:AEst}, \eqref{eq:GEst}}}\\\text{Lem.\ \ref{lem:phiEst}}}}{\ge} \ee^{\sigma t_0} \left\{ \sigma\left(1 + \frac{\|x_0\|^2_{\Rb^2}}{R^2} \right) - c(T') \left(\frac{1}{R^2} + \frac{\|x_0\|_{\Rb^2}}{R^2} \right) \right. \\
				& \qquad \qquad \qquad \qquad \qquad \qquad \qquad \qquad \qquad \left. - c(T') \frac{\|x_0\|_{\Rb^2}}{R^2} \right\} \tr(\sind) \\
			& \stackrel{\hphantom{\text{Eqs.\ \eqref{eq:AEst}, \eqref{eq:GEst}}}}{=} \!\!\! \ee^{\sigma t_0} \left\{ \sigma + \sigma \frac{\|x_0\|^2_{\Rb^2}}{R^2} - 2 c(T') \frac{\|x_0\|_{\Rb^2}}{R^2} - \frac{c(T')}{R^2} \right\} \tr(\sind) \,.
	\end{align*}
	Now we choose $R_0>0$ (depending on $\sigma$ and $T'$) large enough, so that the term
	\[
		\frac{\sigma}{2} + \sigma \frac{\|x_0\|^2_{\Rb^2}}{R^2} - 2 c(T') \frac{\|x_0\|_{\Rb^2}}{R^2} - \frac{c(T')}{R^2}
	\]
	is strictly positive for any $R\ge R_0$ and any $\|x_0\|_{\Rb^2}$. Continuing with the above
	calculation, we obtain
	\[
		(\dt - \Lap) \upchi_{|(x_0,t_0)} \ge \ee^{\sigma t_0} \frac{\sigma}{2} \tr(\sind) \ge \ee^{\sigma t_0} \frac{\sigma}{2} \varepsilon_1 > 0 \,.
	\]
	But this is a contradiction to \eqref{eq:ChiSDC}, which shows the claim.
	
	By first letting $R\to\infty$, then $\sigma\to 0$ and finally $T'\to T$, we have shown
	that
	\[
		\tr(\sind) - \varepsilon_2 \bigl( t\|\Hv\|^2 + 1 \bigr) \ge 0
	\]
	holds for all $t\in[0,T)$.
	The statement of the Lemma follows by noting $\tr(\sind)\le 2$,
	setting $C\coloneqq \frac{2}{\varepsilon_2} - 1$ and recalling that $\varepsilon_2$ only depends
	on $\varepsilon_1$.
\end{proof}

As in \cites{CCH12,Lub16}, we go on by analyzing the non-parametric version of the mean curvature flow
to obtain estimates on all higher derivatives of the map which defines the graph. Note that
most proofs are very similar to the ones in the articles cited, but nevertheless
need to be slightly modified to account for the weaker assumptions in the two-dimensional case.

\begin{lemma}
	\label{lem:HigherOrderEst1}
	Let $F:\Rb^2\times[0,T)\to\Rb^2\times\Rb^2$ be a smooth, graphic solution to \eqref{eq:MCF}
	with bounded geometry. Suppose the corresponding maps $f_t:\Rb^2\to\Rb^2$ satisfy $\|\D f_t\|\le C_1$
	and $\|\D^2 f_t\|\le C_2$ on $\Rb^2\times[0,T)$ for some constants $C_1,C_2\ge 0$. Then
	for every $l\ge 3$, there is a constant $C_l$, such that
	\[
		\sup_{x\in\Rb^2} \|\D^l f_t(x)\|^2 \le C_l
	\]
	for all $t\in[0,T)$.
\end{lemma}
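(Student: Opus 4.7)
The plan is to proceed by induction on $l$, with base cases $l=1,2$ being exactly the hypothesis. The key structural observation is that, under the assumption $\|\D f_t\|\le C_1$, the metric $\tgind = \RgM + f_t^*\RgN$ satisfies $\RgM \le \tgind \le (1+C_1^2)\RgM$, so equation \eqref{eq:pMCF} is a uniformly parabolic quasilinear system in the classical sense, with uniform ellipticity constants depending only on $C_1$. Moreover, with the additional hypothesis $\|\D^2 f_t\|\le C_2$, the coefficients $\tgind^{ij}$ have uniformly bounded first spatial derivatives, placing us in the regime where standard interior Schauder theory applies.

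Suppose inductively that for some $l\ge 3$ we already have $\sup_{x\in\Rb^2}\|\D^k f_t(x)\|\le C_k$ for all $k\le l-1$ and all $t\in[0,T)$. Differentiating \eqref{eq:pMCF} formally $(l-1)$ times, the quantity $v\coloneqq\D^{l-1} f_t$ satisfies a linear parabolic equation of the form
\[
	\partial_t v - \sum_{i,j=1}^2 \tgind^{ij}\partial^2_{ij} v = P\bigl(\D f_t,\D^2 f_t,\dots,\D^{l-1} f_t\bigr) \,,
\]
where $P$ is a universal polynomial in its arguments. By the inductive hypothesis the right-hand side is uniformly bounded on $\Rb^2\times[0,T)$, while the coefficients $\tgind^{ij}$ are uniformly parabolic with spatial Lipschitz norm controlled by $C_2$.

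To promote $v\in L^{\infty}$ to a bound on $\D v=\D^l f_t$ uniform in $x$, I would localize via the parabolic scaling introduced earlier in Section \ref{sec:MCF}. Fix any $(x_0,t_0)\in\Rb^2\times[0,T)$ and rescale by a factor $\tau$ of order one on a unit parabolic cylinder around $(x_0,t_0)$; the rescaled map $\fScaled{\tau}$ again solves \eqref{eq:pMCF}, and the inductive bounds pass to the rescaled equation with constants independent of $(x_0,t_0)$. Interior parabolic Schauder estimates on this unit cylinder then give a bound on $\|\Dy^l\fScaled{\tau}\|$, which unscaled becomes a bound on $\|\D^l f_t(x_0)\|$ depending only on $C_1,\dots,C_{l-1}$, the ellipticity constants, and $t_0$; the $t_0$-dependence can be absorbed by starting the Schauder cylinder slightly before $t_0$ and using the bounded geometry hypothesis to control initial data on each such cylinder. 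Taking the supremum over $(x_0,t_0)$ closes the induction.

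The main obstacle is the non-compactness of $\Rb^2$: unlike in the compact case, one cannot appeal to a global smooth solution theory to obtain uniform higher-derivative bounds automatically, and one must ensure that the Schauder constants are truly independent of the base point. This is exactly what the parabolic scaling accomplishes, because the rescaled equation has the same structural form with uniformly controlled coefficients; hence all estimates are translation-invariant in $x_0$. The only delicate point is the dependence on $t_0$ near $t=0$, which is handled by the short-time existence assumption $\sup_{x\in\Rb^2}\|\D^l f_0(x)\|\le C_l$ inherited from Theorem \ref{thm:ShortTimeEx}, so that initial data for the Schauder argument on small cylinders near $t=0$ are controlled.
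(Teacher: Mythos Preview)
Your approach via interior parabolic regularity combined with scaling is correct in spirit and is a standard alternative, but it differs from the route the paper cites. The paper simply defers to \cite{CCH12}*{Lemma 4.2} (and \cite{Lub16}*{Lemma 5.4}), where the argument proceeds by computing the evolution equation of $\|\D^l f\|^2$ under \eqref{eq:pMCF}, forming a Bernstein-type combination such as $\|\D^l f\|^2 + K\|\D^{l-1}f\|^2$ for large $K$ so that the resulting differential inequality has a good sign, and then applying the non-compact maximum principle with the cutoff $\phi_R$ exactly as in Lemmas \ref{lem:phiEst}--\ref{lem:AreaDecrPres}. That method stays entirely within the maximum-principle framework of the paper and avoids any appeal to linear parabolic theory.

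One technical point in your write-up deserves care: applying Schauder to $v=\D^{l-1}f$ requires the right-hand side $P$ to be H\"older, not merely bounded, and H\"older control of $P$ already involves $\D^l f$. The clean bootstrap is to apply Schauder first to $f$ itself (zero right-hand side, coefficients Lipschitz since $\|\D^2 f\|\le C_2$) to obtain $\D^2 f\in C^{\alpha}$, and then climb one order at a time, so that at the step producing $\D^l f\in L^{\infty}$ you are applying Schauder to $\D^{l-2}f$ with right-hand side already known to be $C^{\alpha}$ from the previous step. With this adjustment your argument goes through; the handling of small $t_0$ via the bounded-geometry hypothesis at $t=0$ is fine.
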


\begin{proof}
	The proof is essentially the same as \cite{CCH12}*{Proof of Lemma 4.2} (see also \cite{Lub16}*{Lemma 5.4} with $m=n=2$).
\end{proof}

\begin{lemma}
	\label{lem:HigherOrderEst2}
	Let $F:\Rb^2\times[0,T)\to\Rb^2\times\Rb^2$ be a smooth, graphic solution to \eqref{eq:MCF} with
	bounded geometry and denote by $f_t:\Rb^2\to\Rb^2$ the corresponding maps. Assume the condition
	$\tr(\sind)\ge \varepsilon$ holds for a fixed $\varepsilon>0$ at time $t=0$. Further assume that
	$\|\Hv\|\le C$ on $\Rb^2\times[0,T)$ for some constant $C\ge 0$. Then for every $l\ge 1$, there is
	a constant $C_l\ge 0$, such that
	\[
		\sup_{x\in\Rb^2} \|\D^l f_t(x)\|^2 \le C_l
	\]
	for all $t\in [0,T)$.
\end{lemma}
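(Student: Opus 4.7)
The plan is to isolate the two cases $l=1$ and $l=2$, and then feed them into Lemma \ref{lem:HigherOrderEst1}, which supplies the bounds for $l\ge 3$. For $l=1$, Lemma \ref{lem:AreaDecrPres} extends the hypothesis $\tr(\sind)\ge\varepsilon$ to all of $\Rb^2\times[0,T)$. Translating this into the singular values $0\le\sv_1\le\sv_2$ of $\dd f_t$ via
\[
	\tr(\sind) = \frac{2(1-\sv_1^2\sv_2^2)}{(1+\sv_1^2)(1+\sv_2^2)}
\]
gives $(1+\sv_1^2)(1+\sv_2^2)\le 2/\varepsilon$ and hence the uniform bound
\[
	\|\D f_t\|^2 = \sv_1^2+\sv_2^2 \le \frac{2}{\varepsilon}-1 \,.
\]
As a byproduct, $\gind$ and $\RgM$ become uniformly equivalent on $\Rb^2\times[0,T)$, which is essential for the rescaling argument that follows.

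For $l=2$, I would argue by contradiction using a parabolic blow-up. Suppose $\sup_{\Rb^2\times[0,T)}\|\D^2 f_t\|=\infty$. A standard point-picking procedure then produces a sequence $(x_k,t_k)$ and scales $\tau_k\to\infty$ such that the rescaled maps $\fScaled{\tau_k}$ of Section \ref{sec:MCF} satisfy $\|\Dy^2 \fScaled{\tau_k}(0,0)\|=1$ and $\|\Dy^2 \fScaled{\tau_k}\|\le 2$ on parabolic neighborhoods of $(0,0)$ whose rescaled size diverges. The scaling identities $\Dy \fScaled{\tau_k}=\D f$ and $\A_{\tau_k}=\A/\tau_k$ guarantee that each $\fScaled{\tau_k}$ is strictly area-decreasing with the same constant $\varepsilon$, has differential bounded by the constant from the $l=1$ step, and has rescaled mean curvature vector $\|\Hv_{\tau_k}\|\le C/\tau_k\to 0$. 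A bootstrap of Lemma \ref{lem:HigherOrderEst1} on the rescaled flows then supplies uniform local bounds on all higher derivatives, so Arzel\`a--Ascoli extracts a smooth subsequential limit $\bar f:\Rb^2\to\Rb^2$ that is time-independent, strictly area-decreasing, has bounded differential, vanishing mean curvature, and $\|\Dy^2 \bar f(0)\|=1$. A Bernstein-type rigidity theorem for strictly area-decreasing minimal maps $\Rb^2\to\Rb^2$ with bounded differential then forces $\bar f$ to be affine, contradicting $\|\Dy^2\bar f(0)\|=1$.

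The hard part is the $l=2$ step: it requires a careful point-picking so that the rescaled flows converge on ever-larger parabolic neighborhoods, uniform propagation of higher-order derivative estimates along the sequence to run the Arzel\`a--Ascoli argument, and an appropriate Bernstein-type rigidity result for area-decreasing minimal graphs from $\Rb^2$ to $\Rb^2$. The remaining case $l\ge 3$ then follows at once from Lemma \ref{lem:HigherOrderEst1}.
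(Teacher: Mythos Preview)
Your proposal is correct and follows essentially the same route as the paper: first extend $\tr(\sind)\ge\varepsilon$ via Lemma \ref{lem:AreaDecrPres} to bound the singular values and hence $\D f_t$, then run a parabolic blow-up for $l=2$ and invoke Lemma \ref{lem:HigherOrderEst1} for $l\ge 3$. The only substantive point to make explicit is the Bernstein step: the paper applies Wang's theorem \cite{Wan03a}*{Theorem 1.1}, whose hypothesis is that the Jacobian $\star\Omega_\infty = \bigl((1+\widetilde\sv_1^2)(1+\widetilde\sv_2^2)\bigr)^{-1/2}$ of the projection is bounded below by a positive constant---which follows from the uniform singular-value bound you already have.
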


\begin{proof}
	By Lemma \ref{lem:AreaDecrPres}, the area-decreasing condition is preserved in $[0,T)$, so that the relation
	$\tr(\sind)\ge \varepsilon$ holds in $[0,T)$. Since $\varepsilon$ is strictly positive, from
	\[
		\tr(\sind) = \frac{2(1-\sv_1^2\sv_2^2)}{(1+\sv_1^2)(1+\sv_2^2)} \ge \varepsilon > 0
	\]
	we infer
	\begin{equation}
		\label{eq:SVBound}
		\varepsilon (1 + \sv_i^2) \le \frac{2(1-\sv_1^2\sv_2^2)}{1+\sv_j^2} \le 2 \,, \qquad (i,j) \in\bigl\{ (1,2), (2,1) \bigr\} \,,
	\end{equation}
	so that the singular values $\sv_1,\sv_2$ of $\D f_t$ are bounded. This also means that $\D f_t$
	itself is bounded, thus showing the claim for $l=1$.
	
	By Lemma \ref{lem:HigherOrderEst1}, we now only need to prove the case $l=2$. Suppose the claim was false
	for $l=2$. Let
	\[
		\eta(t) \coloneqq \sup_{\substack{x\in\Rb^2\\t'\le t}} \|\D^2 f(x,t')\| \,.
	\]
	Then there is a sequence $(x_k,t_k)$ along which we have $\|\D^2 f(x_k,t_k)\|\ge \eta(t_k)/2$ while
	$\eta(t_k)\to\infty$ as $t_k\to T$. Let $\tau_k\coloneqq \eta(t_k)$. For each $k$, let $(y,\fys{\tau_k})$
	be the parabolic scaling of the graph $(x,f(x,t))$ by $\tau_k$ at $(x_k,t_k)$. Then $\fys{\tau_k}$ is a
	smooth solution to \eqref{eq:pMCF} on $\Rb^2\times[-\tau_k^2 t_k,0]$. Note that by the definition $\tau_k=\eta(t_k)$,
	it is
	\begin{align*}
		\|\Dy \fScaled{\tau_k}\| &= \|\D f\| \le C_1 \,, \\
		\|\Dy^2 \fScaled{\tau_k}\| &= \tau_k^{-1} \|\D^2 f\| \le 1
	\end{align*}
	on $\Rb^2\times[-\tau_k^2 t_k,0]$. Moreover, by the definition of the sequence $(x_k,t_k)$, the estimate
	\begin{equation}
		\label{eq:fScaleEst1}
		\|\Dy^2 \fScaled{\tau_k}(0,0)\| = \frac{\| \D^2 f(x_k,t_k)\|}{\tau_k} = \frac{ \|\D^2 f(x_k,t_k)\|}{\eta(t_k)} \ge \frac{1}{2}
	\end{equation}
	holds. By Lemma \ref{lem:HigherOrderEst1}, we conclude that all the higher derivatives of $\fScaled{\tau_k}$
	are uniformly bounded on $\Rb^2\times[-\tau_k^2 t_k,0]$. Thus, the theorem of Arzel\`{a}-Ascoli implies
	the existence of a subsequence of $\fScaled{\tau_k}$ converging smoothly and uniformly on compact subsets
	of $\Rb^2\times(-\infty,0]$ to a smooth solution $\fScaled{\infty}$ to \eqref{eq:pMCF}. Since
	$\|\Hv\|\le C$ for the graphs $\bigl(x,f(x,t)\bigr)$ by assumption, after rescaling
	we have
	\[
		\bigl\|\Hv_{\tau_k}\bigr\| \le \frac{C}{\tau_k}
	\]
	for the graphs $\bigl( \xScaled, \fys{\tau_k} \bigr)$. It follows that for each $\tScaled$ the limiting
	graph $\bigl( \xScaled, \fys{\infty} \bigr)$ must have $\|\Hv_{\infty}\|=0$ everywhere, as well as
	$\tr(\sind)\ge \varepsilon$. Note that by Equation \eqref{eq:SVBound}, this implies bounds
	for the singular values $\widetilde{\sv}_1,\widetilde{\sv}_2$ of the limiting graph,
	\[
		1+\widetilde{\sv}_k^2 \le \frac{2}{\varepsilon} \,, \qquad k=1,2 \,.
	\]
	It follows that we can estimate the Jacobian
	of the projection $\RpiM$ from the graph $\bigl( \xScaled, \fys{\infty} \bigr)$ to $\Rb^2$,
	\[
		0 < \frac{\varepsilon}{2} \le \star \Omega_{\infty} = \frac{1}{\sqrt{(1+\widetilde{\sv}_1^2)(1+\widetilde{\sv}_2^2)}} \le 1 \,.
	\]
	Thus, we can apply a Bernstein-type theorem of Wang \cite{Wan03a}*{Theorem 1.1} to conclude
	that the graph $\bigl(y,\fys{\infty}\bigr)$ is an affine subspace of $\Rb^2\times\Rb^2$. Therefore,
	$\fys{\infty}$ has to be a linear map, but this contradicts \eqref{eq:fScaleEst1}, which (taking the
	limit $k\to\infty$) implies the estimate $\|\Dy^2 \fys{\infty}(0,0)\| \ge 1/2$.
\end{proof}

\begin{lemma}
	\label{lem:HeightEst}
	Suppose $f(x,t)$ is a smooth solution to \eqref{eq:pMCF} on $[0,T)$ that satisfies the bounded geometry
	condition. Then
	\[
		\sup_{x\in\Rb^2} \| f(x,t)\|^2 \le \sup_{x\in\Rb^2} \|f(x,0)\|^2
	\]
	holds for all $t\in [0,T']$, where $T'\in[0,T)$ is arbitrary.
\end{lemma}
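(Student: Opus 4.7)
My plan is to view \eqref{eq:pMCF} as a linear parabolic equation with coefficients $\tgind^{ij}(x,t)$, derive a sub-solution inequality for $\|f\|^2$, and then run a non-compact maximum principle in the spirit of Lemmas~\ref{lem:AreaDecrEpsilon} and \ref{lem:HDecay}. A direct computation using \eqref{eq:pMCF} gives
\[
\bigl(\dt - L\bigr)\|f\|^2 = -2\sum_{i,j=1}^{2}\tgind^{ij}\,\partial_i f\cdot\partial_j f\le 0,\qquad L \coloneqq \sum_{i,j=1}^{2}\tgind^{ij}\partial^2_{ij},
\]
because $\tgind^{ij}$ is positive definite, so $\|f\|^2$ is a sub-solution of a linear parabolic equation whose coefficients are uniformly bounded and uniformly elliptic on $\Rb^2\times[0,T']$ for any $T'\in[0,T)$.

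The main obstacle will be that $\|f(\cdot,t)\|^2$ is not a priori uniformly bounded in $x$. What the bounded-geometry assumption does give is that $\|\D f_t\|$ and (through the equation itself) $\|\dt f_t\|$ are uniformly bounded on $[0,T']$, which forces the linear growth estimate $\|f(x,t)\|\le A(T')+B(T')\|x\|$; consequently $\|f\|^2$ can grow at most quadratically in $\|x\|$. This is exactly why the linear barrier $\phi_R$ used elsewhere in the paper is too weak here, and I would replace it by the quartic barrier $\phi_R^2$ and work with
\[
\uppsi(x,t)\coloneqq \|f(x,t)\|^2 - M - \varepsilon\,\ee^{\sigma t}\phi_R(x)^2,\qquad M\coloneqq\sup_{y\in\Rb^2}\|f(y,0)\|^2,
\]
for parameters $\varepsilon,\sigma,R>0$ to be chosen.

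Since the quartic barrier dominates the quadratic growth of $\|f\|^2$, for each fixed $R$ we get $\uppsi(\cdot,t)\to-\infty$ as $\|x\|\to\infty$ uniformly on $[0,T']$, and clearly $\uppsi<0$ at $t=0$. If $\uppsi$ were to touch zero, there would be a first such point $(x_0,t_0)$ lying in some compact set, at which the second derivative criterion forces $(\dt-L)\uppsi(x_0,t_0)\ge 0$. On the other hand, combining the sub-solution property with the short computation $L\phi_R^2\le c(T')\phi_R/R^2$—a direct consequence of the bounded-geometry bounds on $\tgind^{ij}$—yields
\[
(\dt-L)\uppsi\le -\,\varepsilon\,\ee^{\sigma t}\phi_R\bigl(\sigma\phi_R-c(T')/R^2\bigr),
\]
which is strictly negative as soon as $\sigma>c(T')/R^2$. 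This contradiction forces $\uppsi<0$ on $\Rb^2\times[0,T']$, and the conclusion follows by letting $\varepsilon\to 0$ with $R,\sigma$ fixed. Structurally the argument mirrors the maximum-principle proofs earlier in the paper; the only genuinely new ingredient is the quartic barrier, forced by the fact that $\|f\|^2$ itself may grow at the same quadratic rate as the usual linear barrier $\phi_R$.
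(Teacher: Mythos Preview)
Your argument is correct. The paper itself does not give a self-contained proof of this lemma; it simply cites \cite{Lub16}*{Lemma 5.6} with $m=n=2$, so there is no detailed proof here to compare against. Your approach---deriving $(\dt-L)\|f\|^2\le 0$ and then running a barrier maximum principle on $\Rb^2$---is exactly in the spirit of the other non-compact maximum-principle arguments in the paper (Lemmas~\ref{lem:AreaDecrEpsilon} and~\ref{lem:HDecay}), and is presumably close to what the cited reference does as well.

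Your observation that the quartic barrier $\phi_R^2$ is needed, rather than the quadratic $\phi_R$ used elsewhere in the paper, is the one substantive point and it is well taken: the bounded-geometry hypothesis only yields the linear growth $\|f(x,t)\|\le A(T')+B(T')\|x\|$ (via boundedness of $\D f_t$ and of $\dt f_t$ on $[0,T']$), so $\|f\|^2$ may grow quadratically and a quadratic barrier would not force $\uppsi\to-\infty$ at spatial infinity for arbitrarily small $\varepsilon$. The computation $L\phi_R^2\le c(T')\phi_R/R^2$ is straightforward from the bounded-geometry control on $\tgind^{ij}$, and the final contradiction goes through exactly as you describe. One minor remark: you should note explicitly that if $M=\sup_{x}\|f(x,0)\|^2=\infty$ the statement is vacuous, so one may assume $M<\infty$, which is what makes the linear-growth estimate (and hence the barrier domination) available.
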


\begin{proof}
	This is \cite{Lub16}*{Lemma 5.6} with $m=n=2$.
\end{proof}

%%%%%%%%%%%%%%%%%%%%%%%%%%%%%%%%%%%%%%%%%%%%%%%%%%%%%%%%%%%%%%%%%%%%%%%%%%%%%%%%%%
% PROOF OF THEOREM A
%%%%%%%%%%%%%%%%%%%%%%%%%%%%%%%%%%%%%%%%%%%%%%%%%%%%%%%%%%%%%%%%%%%%%%%%%%%%%%%%%%
\section{Proof of Theorem \ref{thm:ThmA}}
\label{sec:Proof}

Using Lemma \ref{lem:AreaDecrPres} and the estimates from 
the Lemmas \ref{lem:HDecay}, \ref{lem:HigherOrderEst1} and \ref{lem:HigherOrderEst2},
the proof of the long-time existence of the solution
is the same as in \cite{CCH12}*{Lemma 5.2}.
By Lemma \ref{lem:GraphPres}, the evolving surface stays a graph of an
area-decreasing map $f_t:\Rb^2\to\Rb^2$.
The decay estimate for $\Hv$ is given in Lemma \ref{lem:HDecay}.

Employing the Lemmas \ref{lem:AreaDecrPres},
the bounds on the singular values from Equation \eqref{eq:SVBound}
and the decay estimates from Lemmas \ref{lem:HDecay},
\ref{lem:HigherOrderEst1} and \ref{lem:HigherOrderEst2},
the proof of the decay estimates for the higher-order derivatives of $f_t$
follows in the same way as in \cite{Lub16}*{Lemma 6.3}.
The height estimate is provided by Lemma \ref{lem:HeightEst}. \\

If we assume $\|f_0\|\to 0$ for $\|x\|\to\infty$, we know by Lemma \ref{lem:HeightEst} that $\sup_{x\in\Rb^2}\|f(x,t)\|$
stays bounded. As the singular values $\sv_1,\sv_2\ge 0$ are uniformly bounded, so is $\widetilde{\gind}$, which
means the equation
\[
	\frac{\partial f}{\partial t}(x,t) = \sum_{i,j=1}^2 \widetilde{\gind}^{ij} \frac{\partial^2 f}{\partial x^i \partial x^j}(x,t)
\]
is uniformly parabolic. Then, by the theorem in \cite{Ili61}, $f(x,t)\to 0$ as $t\to\infty$, uniformly
with respect to $x$. This shows the convergence part of Theorem \ref{thm:ThmA} and concludes the proof. \hfill $\Box$

%%%%%%%%%%%%%%%%%%%%%%%%%%%%%%%%%%%%%%%%%%%%%%%%%%%%%%%%%%%%%%%%%%%%%%%%%%%%%%%%%%
% APPLICATIONS
%%%%%%%%%%%%%%%%%%%%%%%%%%%%%%%%%%%%%%%%%%%%%%%%%%%%%%%%%%%%%%%%%%%%%%%%%%%%%%%%%%
\section{Applications}
\label{sec:Appl}

We demonstrate how to apply Theorem \ref{thm:ThmA} to the examples considered in \cite{Lub16}*{Section 9}.
Note that both proofs are formally the same as \cite{Lub16}*{Proofs of Examples 9.3 and 9.4}, and
we state them here for completeness.

Let $F_t:\Rb^2\to\Rb^2\times\Rb^2$ be a graphical self-shrinking solution to the mean curvature flow, and
denote by $f_t:\Rb^2\to\Rb^2$ the corresponding map. Then $f_1$ satisfies the equation
\begin{equation}
	\label{eq:ShrSol}
	\sum_{i,j=1}^2 \tgind^{ij} \frac{\partial^2 f_1^k(x)}{\partial x^i \partial x^j} = - \frac{1}{2} f^k_1(x) + \frac{1}{2} \< \D f_1^k(x), x \> \,, \quad k=1,2 \,.
\end{equation}
If $F_t$ is a translating solution to the mean curvature flow, then there is $\xi\in\Rb^2\times\Rb^2$, such
that $\Hv = \prN(\xi)$. If the initial data is given by $F_0(x) = (x,f(x))$, then for $F_t$ to be a translating solution
the function $f$ has to satisfy
\begin{equation}
	\label{eq:TransSol}
	\sum_{i,j=1}^2 \tgind^{ij} \frac{\partial^2 f(x)}{\partial x^i \partial x^j} = \der \RpiN(\xi) - \< \D f(x), \der \RpiM(\xi) \> \,.
\end{equation}

\begin{example}[A Bernstein Theorem for Self-Shrinking Solutions]
	Let $v:\Rb^2\to\Rb^2$ be a strictly area-decreasing map with bounded second fundamental form and satisfying \eqref{eq:ShrSol}.
	Then $v$ is a linear function.
\end{example}

\begin{proof}
	Since $v$ is a smooth solution to \eqref{eq:ShrSol}, the function
	\[
		f_t(x) \coloneqq \sqrt{-t} v\left( \frac{x}{\sqrt{-t}} \right)
	\]
	is a solution to \eqref{eq:pMCF} for $t\in(-\infty,0]$ and $f_{-1}(x) = v(x)$. Since this solution is unique
	by \cite{CY07}*{Theorem 1.1}, we can apply Theorem \ref{thm:ThmA}. In particular, $\|\D^2 f_t(x)\|\le C$
	for some constant $C$ for $t\ge -1$ and any $x$. Since also
	\[
		\D^2 f_t(x) = \frac{1}{\sqrt{-t}} \D^2 v\left( \frac{x}{\sqrt{-t}} \right) \,,
	\]
	we obtain the estimate
	\[
		\|\D^2v(x)\| \le C\sqrt{-t}
	\]
	for any $x$. Letting $t\to 0$, this implies $\D^2v(x) = 0$, so that $v$ is a linear function.
\end{proof}

\begin{example}[A Bernstein Theorem for Translating Solutions]
	Let $v:\Rb^2\to\Rb^2$ be a strictly area-decreasing map with bounded second fundamental form and satisfying \eqref{eq:TransSol}.
	Then $v$ is a linear function.
\end{example}

\begin{proof}
	If $v$ solves \eqref{eq:TransSol}, then 
	there is a constant vector $\xi\in\Rb^2\times\Rb^2$, such that
	\[
		f_t(x) \coloneqq v\bigl( x - \der \RpiM(\xi)t \bigr) + \der \RpiN(\xi)t
	\]
	solves \eqref{eq:pMCF} with initial condition $f_0(x) = v(x)$.
	
	On the other hand, by Theorem \ref{thm:ThmA} there is a long-time solution $f_t(x)$ to \eqref{eq:pMCF} with
	initial condition $f_0$ which satisfies $\|\sup_{x\in\Rb^2} \D^2f_t(x) \| \to 0$ as $t\to\infty$. By
	the uniqueness result \cite{CY07}*{Theorem 1.1},
	\[
		\sup_{x\in\Rb^2} \bigl\| \D^2 v\bigl( x - \der \RpiM(\xi)t \bigr) \bigr\| = \sup_{x\in\Rb^2} \bigl\| \D^2 f_t(x) \bigr\| \to 0
	\]
	as $t\to\infty$. We conclude that $\sup_{x\in\Rb^2} \| \D^2v(x)\| = 0$, so $v$ must be linear.
\end{proof}

\begin{bibdiv}  
\begin{biblist}
\bib{Bra78}{book}{
  title = {The motion of a surface by its mean curvature},
  publisher = {Princeton University Press, Princeton, N.J.},
  year = {1978},
  author = {Brakke, Kenneth A.},
  volume = {20},
  pages = {i+252},
  series = {Mathematical Notes},
  isbn = {0-691-08204-9},
%  mrclass = {49F22 (35K99 49F20 58D25)},
%  mrnumber = {485012},
%  mrreviewer = {Jean E. Taylor}
}

\bib{CCH12}{article}{
  author = {Chau, Albert},
  author = {Chen, Jingyi},
  author = {He, Weiyong},
  title = {Lagrangian mean curvature flow for entire {L}ipschitz graphs},
  journal = {Calc. Var. Partial Differential Equations},
  year = {2012},
  volume = {44},
  pages = {199--220},
  number = {1-2},
  doi = {\doi{10.1007/s00526-011-0431-x}},
%  fjournal = {Calculus of Variations and Partial Differential Equations},
  issn = {0944-2669},
%  mrclass = {53C44 (35K55)},
%  mrnumber = {2898776},
%  mrreviewer = {Robert Haslhofer},
%  url = {http://dx.doi.org/10.1007/s00526-011-0431-x}
}

\bib{CCY13}{article}{
  author = {Chau, Albert},
  author = {Chen, Jingyi},
  author = {Yuan, Yu},
  title = {Lagrangian mean curvature flow for entire {L}ipschitz graphs {II}},
  journal = {Math. Ann.},
  year = {2013},
  volume = {357},
  pages = {165--183},
  number = {1},
  doi = {\doi{10.1007/s00208-013-0897-2}},
%  fjournal = {Mathematische Annalen},
  issn = {0025-5831},
%  mrclass = {53C44 (35A01 35B65 35K15 35K55)},
%  mrnumber = {3084345},
%  mrreviewer = {Robert Haslhofer},
%  url = {http://dx.doi.org/10.1007/s00208-013-0897-2}
}

\bib{CY07}{article}{
  author = {Chen, Bing-Long},
  author = {Yin, Le},
  title = {Uniqueness and pseudolocality theorems of the mean curvature flow},
  journal = {Comm. Anal. Geom.},
  year = {2007},
  volume = {15},
  pages = {435--490},
  number = {3},
%  fjournal = {Communications in Analysis and Geometry},
  issn = {1019-8385},
%  mrclass = {53C44 (35K55)},
%  mrnumber = {2379801},
%  mrreviewer = {Fernando Schwartz},
  url = {http://projecteuclid.org/euclid.cag/1201269325}
}

\bib{CLT02}{article}{
  author = {Chen, Jing Yi},
  author = {Li, Jia Yu},
  author = {Tian, Gang},
  title = {Two-dimensional graphs moving by mean curvature flow},
  journal = {Acta Math. Sin. (Engl. Ser.)},
  year = {2002},
  volume = {18},
  pages = {209--224},
  number = {2},
  doi = {\doi{10.1007/s101140200163}},
%  fjournal = {Acta Mathematica Sinica (English Series)},
  issn = {1439-8516},
%  mrclass = {53C44},
%  mrnumber = {1910957},
%  mrreviewer = {Shu-Yu Hsu},
%  url = {http://dx.doi.org/10.1007/s101140200163}
}

\bib{EH91}{article}{
  author = {Ecker, Klaus},
  author = {Huisken, Gerhard},
  title = {Interior estimates for hypersurfaces moving by mean curvature},
  journal = {Invent. Math.},
  year = {1991},
  volume = {105},
  pages = {547--569},
  number = {3},
%  coden = {INVMBH},
  doi = {\doi{10.1007/BF01232278}},
%  fjournal = {Inventiones Mathematicae},
  issn = {0020-9910},
%  mrclass = {53A10 (35K55 58E12)},
%  mrnumber = {1117150},
%  mrreviewer = {Friedrich Sauvigny},
%  url = {http://dx.doi.org/10.1007/BF01232278}
}

\bib{EH89}{article}{
  author = {Ecker, Klaus},
  author = {Huisken, Gerhard},
  title = {Mean curvature evolution of entire graphs},
  journal = {Ann. of Math. (2)},
  year = {1989},
  volume = {130},
  pages = {453--471},
  number = {3},
%  coden = {ANMAAH},
  doi = {\doi{10.2307/1971452}},
%  fjournal = {Annals of Mathematics. Second Series},
  issn = {0003-486X},
%  mrclass = {53A10 (53C45)},
%  mrnumber = {1025164},
%  mrreviewer = {S. Walter Wei},
%  url = {http://dx.doi.org/10.2307/1971452}
}

\bib{Ili61}{article}{
  author = {Il{\cprime}in, A. M.},
  title = {On the behavior of the solution of the {C}auchy problem for a parabolic
	equation under unrestricted growth of time},
  journal = {Uspehi Mat. Nauk},
  year = {1961},
  volume = {16},
  pages = {115--121},
  number = {2 (98)},
%  fjournal = {Akademiya Nauk SSSR i Moskovskoe Matematicheskoe Obshchestvo. Uspekhi
%	Matematicheskikh Nauk},
  issn = {0042-1316},
%  mrclass = {35.62},
%  mrnumber = {0126623},
%  mrreviewer = {D. G. Aronson}
}

\bib{LL11}{article}{
  author = {Lee, Kuo-Wei},
  author = {Lee, Yng-Ing},
  title = {Mean curvature flow of the graphs of maps between compact manifolds},
  journal = {Trans. Amer. Math. Soc.},
  year = {2011},
  volume = {363},
  pages = {5745--5759},
  number = {11},
%  coden = {TAMTAM},
  doi = {\doi{10.1090/S0002-9947-2011-05204-9}},
%  fjournal = {Transactions of the American Mathematical Society},
  issn = {0002-9947},
%  mrclass = {53C44},
%  mrnumber = {2817407},
%  mrreviewer = {Oliver C. Schn{\"u}rer},
%  url = {http://dx.doi.org/10.1090/S0002-9947-2011-05204-9}
}

\bib{Lub16}{article}{
  author = {Lubbe, Felix},
  title = {Mean curvature flow of contractions between Euclidean spaces},
  journal = {Calculus of Variations and Partial Differential Equations},
  year = {2016},
  volume = {55},
  pages = {1--28},
  number = {4},
%  abstract = {We consider the mean curvature flow of the graphs of maps between
%	Euclidean spaces of arbitrary dimension. If the initial map is Lipschitz
%	and satisfies a length-decreasing condition, we show that the mean
%	curvature flow has a smooth long-time solution for {\$}{\$}t>0{\$}{\$}
%	t > 0 . Further, we prove uniform decay estimates for the mean curvature
%	vector and for all higher-order derivatives of the defining map.},
  doi = {\doi{10.1007/s00526-016-1043-2}},
  issn = {1432-0835},
%  url = {http://dx.doi.org/10.1007/s00526-016-1043-2}
}

\bib{Sch93}{article}{
   author={Schoen, Richard M.},
   title={The role of harmonic mappings in rigidity and deformation
   problems},
   conference={
      title={Complex geometry},
      address={Osaka},
      date={1990},
   },
   book={
      series={Lecture Notes in Pure and Appl. Math.},
      volume={143},
      publisher={Dekker, New York},
   },
   date={1993},
   pages={179--200},
%   review={\MR{1201611}},
}

\bib{SHS16}{article}{
  author = {Savas-Halilaj, Andreas},
  author = {Smoczyk, Knut},
  title = {Mean curvature flow of area decreasing maps between Riemann surfaces},
  note = {\texttt{\arxiv{1602.07595}}},
}

\bib{SHS15}{article}{
  author = {Savas-Halilaj, Andreas},
  author = {Smoczyk, Knut},
  title = {Evolution of contractions by mean curvature flow},
  journal = {Math. Ann.},
  year = {2015},
  volume = {361},
  pages = {725--740},
  number = {3-4},
  doi = {\doi{10.1007/s00208-014-1090-y}},
%  fjournal = {Mathematische Annalen},
  issn = {0025-5831},
%  mrclass = {53C44 (35K55 53C42 57R52)},
%  mrnumber = {3319546},
%  mrreviewer = {Yi Li},
%  url = {http://dx.doi.org/10.1007/s00208-014-1090-y}
}

\bib{SHS13}{article}{
  author = {Savas-Halilaj, Andreas},
  author = {Smoczyk, Knut},
  title = {Bernstein theorems for length and area decreasing minimal maps},
  journal = {Calc. Var. Partial Differential Equations},
  year = {2014},
  volume = {50},
  pages = {549--577},
  number = {3-4},
  doi = {\doi{10.1007/s00526-013-0646-0}},
%  fjournal = {Calculus of Variations and Partial Differential Equations},
  issn = {0944-2669},
%  mrclass = {53C40 (53A07 58J05)},
%  mrnumber = {3216824},
%  mrreviewer = {Mircea Crasmareanu},
%  url = {http://dx.doi.org/10.1007/s00526-013-0646-0}
}

\bib{SHS14}{article}{
  author = {Savas-Halilaj, Andreas},
  author = {Smoczyk, Knut},
  title = {Homotopy of area decreasing maps by mean curvature flow},
  journal = {Adv. Math.},
  year = {2014},
  volume = {255},
  pages = {455--473},
  doi = {\doi{10.1016/j.aim.2014.01.014}},
%  fjournal = {Advances in Mathematics},
  issn = {0001-8708},
%  mrclass = {53C44 (35K55 53C42 57R52)},
%  mrnumber = {3167489},
%  mrreviewer = {Gabjin Yun},
%  url = {http://dx.doi.org/10.1016/j.aim.2014.01.014}
}

\bib{Smo12}{incollection}{
  author = {Smoczyk, Knut},
  title = {Mean curvature flow in higher codimension: introduction and survey},
  booktitle = {Global differential geometry},
  publisher = {Springer, Heidelberg},
  year = {2012},
  volume = {17},
  series = {Springer Proc. Math.},
  pages = {231--274},
  doi = {\doi{10.1007/978-3-642-22842-1\_9}},
%  mrclass = {53C44},
%  mrnumber = {3289845},
%  mrreviewer = {Karsten T. Gimre},
%  url = {http://dx.doi.org/10.1007/978-3-642-22842-1\_9}
}

\bib{Smo04}{article}{
  author = {Smoczyk, Knut},
  title = {Longtime existence of the {L}agrangian mean curvature flow},
  journal = {Calc. Var. Partial Differential Equations},
  year = {2004},
  volume = {20},
  pages = {25--46},
  number = {1},
  doi = {\doi{10.1007/s00526-003-0226-9}},
%  fjournal = {Calculus of Variations and Partial Differential Equations},
  issn = {0944-2669},
%  mrclass = {53C44},
%  mrnumber = {2047144},
%  mrreviewer = {Henri Anciaux},
%  url = {http://dx.doi.org/10.1007/s00526-003-0226-9}
}

\bib{STW16}{article}{
  author = {Smoczyk, Knut},
  author = {Tsui, Mao-Pei},
  author = {Wang, Mu-Tao},
  title = {Curvature decay estimates of graphical mean curvature flow in higher
	codimensions},
  journal = {Trans. Amer. Math. Soc.},
  year = {2016},
  note = {(to appear in print)},
%  comment = {electronically published on January 26, 2016},
  doi = {\doi{10.1090/tran/6624}}
}

\bib{TW04}{article}{
  author = {Tsui, Mao-Pei},
  author = {Wang, Mu-Tao},
  title = {Mean curvature flows and isotopy of maps between spheres},
  journal = {Comm. Pure Appl. Math.},
  year = {2004},
  volume = {57},
  pages = {1110--1126},
  number = {8},
%  coden = {CPAMA},
  doi = {\doi{10.1002/cpa.20022}},
%  fjournal = {Communications on Pure and Applied Mathematics},
  issn = {0010-3640},
%  mrclass = {53C44},
%  mrnumber = {2053760},
%  mrreviewer = {Henri Anciaux},
%  url = {http://dx.doi.org/10.1002/cpa.20022}
}

\bib{Wan05}{article}{
  author = {Wang, Mu-Tao},
  title = {Subsets of {G}rassmannians preserved by mean curvature flows},
  journal = {Comm. Anal. Geom.},
  year = {2005},
  volume = {13},
  pages = {981--998},
  number = {5},
%  fjournal = {Communications in Analysis and Geometry},
  issn = {1019-8385},
%  mrclass = {53C44},
%  mrnumber = {2216149},
%  mrreviewer = {Henri Anciaux},
  url = {http://projecteuclid.org/euclid.cag/1144438304}
}

\bib{Wan03a}{article}{
  author = {Wang, Mu-Tao},
  title = {On graphic {B}ernstein type results in higher codimension},
  journal = {Trans. Amer. Math. Soc.},
  year = {2003},
  volume = {355},
  pages = {265--271},
  number = {1},
%  coden = {TAMTAM},
  doi = {\doi{10.1090/S0002-9947-02-03108-2}},
%  fjournal = {Transactions of the American Mathematical Society},
  issn = {0002-9947},
%  mrclass = {58E12 (35J60 49Q05 53A10)},
%  mrnumber = {1928088},
%  mrreviewer = {Giandomenico Orlandi},
%  url = {http://dx.doi.org/10.1090/S0002-9947-02-03108-2}
}

\bib{Wan03b}{article}{
  author = {Wang, Mu-Tao},
  title = {Gauss maps of the mean curvature flow},
  journal = {Math. Res. Lett.},
  year = {2003},
  volume = {10},
  pages = {287--299},
  number = {2-3},
  doi = {\doi{10.4310/MRL.2003.v10.n3.a2}},
%  fjournal = {Mathematical Research Letters},
  issn = {1073-2780},
%  mrclass = {53C44},
%  mrnumber = {1981905},
%  url = {http://dx.doi.org/10.4310/MRL.2003.v10.n3.a2}
}

\bib{Wan01b}{article}{
  author = {Wang, Mu-Tao},
  title = {Long-time existence and convergence of graphic mean curvature flow
	in arbitrary codimension},
  journal = {Invent. Math.},
  year = {2002},
  volume = {148},
  pages = {525--543},
  number = {3},
%  coden = {INVMBH},
  doi = {\doi{10.1007/s002220100201}},
%  fjournal = {Inventiones Mathematicae},
  issn = {0020-9910},
%  mrclass = {53C44 (35B40 35K55)},
%  mrnumber = {1908059},
%  mrreviewer = {Xi Ping Zhu},
%  url = {http://dx.doi.org/10.1007/s002220100201}
}

\bib{Wan01a}{article}{
  author = {Wang, Mu-Tao},
  title = {Deforming area preserving diffeomorphism of surfaces by mean curvature
	flow},
  journal = {Math. Res. Lett.},
  year = {2001},
  volume = {8},
  pages = {651--661},
  number = {5-6},
  doi = {\doi{10.4310/MRL.2001.v8.n5.a7}},
%  fjournal = {Mathematical Research Letters},
  issn = {1073-2780},
%  mrclass = {53C44},
%  mrnumber = {1879809},
%  url = {http://dx.doi.org/10.4310/MRL.2001.v8.n5.a7}
}

\bib{Wan01c}{article}{
  Title                    = {Mean Curvature Flow of Surfaces in {E}instein Four-Manifolds},
  Author                   = {Wang, Mu-Tao},
  Journal                  = {J. Differential Geom.},
  Year                     = {2001},
  Number                   = {2},
  Pages                    = {301--338},
  Volume                   = {57},
%  Ajournal                 = {J. Differential Geom.},
  Publisher                = {Lehigh University},
  Url                      = {http://projecteuclid.org/euclid.jdg/1090348113}
}

\end{biblist}
\end{bibdiv}

\vskip 5\baselineskip

% hide email link
\hypersetup{
	hidelinks
}

% author information
\begin{flushleft}	
	\textsc{Felix Lubbe} \\
	\textsc{Mathematisches Institut} \\
	\textsc{Georg-August-Universit\"{a}t G\"{o}ttingen} \\
	\textsc{Bunsenstra{\ss}e 3--5}\\
	\textsc{37073 G\"{o}ttingen, Germany} \\
	\textsl{E-mail address:} \textbf{\href{mailto:Felix.Lubbe@mathematik.uni-goettingen.de}{Felix.Lubbe@mathematik.uni-goettingen.de}}
\end{flushleft}

\end{document}